\newtheorem{theorem}{{\sc Theorem}}[section]
\newtheorem{lemma}[theorem]{{\sc Lemma}}
\newtheorem{remark}[theorem]{Remark}
\newtheorem{definition}[theorem]{Definition}
\def\XXint#1#2#3{{\setbox0=\hbox{$#1{#2#3}{\int}$ }
\vcenter{\hbox{$#2#3$ }}\kern-.6\wd0}}
\newcommand{\Gl}{\lambda}
\newcommand{\Gth}{\theta}
\newcommand{\Gs}{\sigma}
\bmdefine\BGa{\alpha}
\bmdefine\BGb{\beta}
\bmdefine\BGd{\delta}
\bmdefine\BGe{\epsilon}
\bmdefine\BGve{\varepsilon}
\bmdefine\BGf{\phi}
\bmdefine\BGvf{\varphi}
\bmdefine\BGg{\gamma}
\bmdefine\BGc{\chi}
\bmdefine\BGi{\iota}
\bmdefine\BGk{\kappa}
\bmdefine\BGl{\lambda}
\bmdefine\BGn{\eta}
\bmdefine\BGm{\mu}
\bmdefine\BGv{\nu}
\bmdefine\BGp{\pi}
\bmdefine\BGth{\theta}
\bmdefine\BGvth{\vartheta}
\bmdefine\BGr{\rho}
\bmdefine\BGvr{\varrho}
\bmdefine\BGs{\sigma}
\bmdefine\BGvs{\varsigma}
\bmdefine\BGt{\tau}
\bmdefine\BGj{\tau}
\bmdefine\BGu{\upsilon}
\bmdefine\BGo{\omega}
\bmdefine\BGx{\xi}
\bmdefine\BGy{\psi}
\bmdefine\BGz{\zeta}
\bmdefine\BGD{\Delta}
\bmdefine\BGF{\Phi}
\bmdefine\BGG{\Gamma}
\bmdefine\BGL{\Lambda}
\bmdefine\BGP{\Pi}
\bmdefine\BGT{\Theta}
\bmdefine\BGS{\Sigma}
\bmdefine\BGU{\Upsilon}
\bmdefine\BGO{\Omega}
\bmdefine\BGX{\Xi}
\bmdefine\BGY{\Psi}
\bmdefine\BCA{{\mathcal A}}
\bmdefine\BCB{{\mathcal B}}
\bmdefine\BCC{{\mathcal C}}
\bmdefine\BCD{{\mathcal D}}
\bmdefine\BCE{{\mathcal E}}
\bmdefine\BCF{{\mathcal F}}
\bmdefine\BCG{{\mathcal G}}
\bmdefine\BCH{{\mathcal H}}
\bmdefine\BCI{{\mathcal I}}
\bmdefine\BCJ{{\mathcal J}}
\bmdefine\BCK{{\mathcal K}}
\bmdefine\BCL{{\mathcal L}}
\bmdefine\BCM{{\mathcal M}}
\bmdefine\BCN{{\mathcal N}}
\bmdefine\BCO{{\mathcal O}}
\bmdefine\BCP{{\mathcal P}}
\bmdefine\BCQ{{\mathcal Q}}
\bmdefine\BCR{{\mathcal R}}
\bmdefine\BCS{{\mathcal S}}
\bmdefine\BCT{{\mathcal T}}
\bmdefine\BCU{{\mathcal U}}
\bmdefine\BCV{{\mathcal V}}
\bmdefine\BCW{{\mathcal W}}
\bmdefine\BCX{{\mathcal X}}
\bmdefine\BCY{{\mathcal Y}}
\bmdefine\BCZ{{\mathcal Z}}
\bmdefine\Bzr{ 0}
\bmdefine\Ba{ a}
\bmdefine\Bb{ b}
\bmdefine\Bc{ c}
\bmdefine\Bd{ d}
\bmdefine\Be{ e}
\bmdefine\Bf{ f}
\bmdefine\Bg{ g}
\bmdefine\Bh{ h}
\bmdefine\Bi{ i}
\bmdefine\Bj{ j}
\bmdefine\Bk{ k}
\bmdefine\Bl{ l}
\bmdefine\Bm{ m}
\bmdefine\Bn{ n}
\bmdefine\Bo{ o}
\bmdefine\Bp{ p}
\bmdefine\Bq{ q}
\bmdefine\Br{ r}
\bmdefine\Bs{ s}
\bmdefine\Bt{ t}
\bmdefine\Bu{ u}
\bmdefine\Bv{ v}
\bmdefine\Bw{ w}
\bmdefine\Bx{ x}
\bmdefine\By{ y}
\bmdefine\Bz{ z}
\bmdefine\BA{ A}
\bmdefine\BB{ B}
\bmdefine\BC{ C}
\bmdefine\BD{ D}
\bmdefine\BE{ E}
\bmdefine\BF{ F}
\bmdefine\BG{ G}
\bmdefine\BH{ H}
\bmdefine\BI{ I}
\bmdefine\BJ{ J}
\bmdefine\BK{ K}
\bmdefine\BL{ L}
\bmdefine\BM{ M}
\bmdefine\BN{ N}
\bmdefine\BO{ O}
\bmdefine\BP{ P}
\bmdefine\BQ{ Q}
\bmdefine\BR{ R}
\bmdefine\BS{ S}
\bmdefine\BT{ T}
\bmdefine\BU{ U}
\bmdefine\BV{ V}
\bmdefine\BW{ W}
\bmdefine\BX{ X}
\bmdefine\BY{ Y}
\bmdefine\BZ{ Z}
\newcounter{question}
\newcounter{com}
\newcommand{\norm}[1]{\left\lVert #1 \right\rVert}
\title{The buckling load of cylindrical shells under axial compression depends on the cross-section curvature}
\author{Davit Harutyunyan\thanks{University of California Santa Barbara, harutyunyan@math.ucsb.edu}
 and Andre Martins Rodrigues\thanks{University of California Santa Barbara, andre02@ucsb.edu}}
\date{}
\begin{document}
\maketitle

\begin{abstract}
It is known that the famous theoretical formula by Koiter for the critical buckling load of circular cylindrical shells under axial compression does not 
coincide with the experimental data. Namely, while Koiter's formula predicts linear dependence of the buckling load $\lambda(h)$ of the shell thickness 
$h$ ($h>0$ is a small parameter), one observes the dependence $\lambda(h)\sim h^{3/2}$ in experiments; i.e., the shell buckles at much smaller loads for small thickness. This theoretical prediction failure is believed to be caused by the so-called sensitivity to imperfections phenomenon (both, shape and load). Grabovsky and the first author have rigorously proven in [\textit{J. Nonl. Sci.,}  Vol. 26, Iss. 1, pp. 83--119, Feb. 2016], that in the problem of circular cylindrical shells buckling under axial compression, a small load twist leads to the buckling load scaling $\lambda(h)\sim h^{5/4},$ while shape imperfections are likely to result in the scaling $\lambda(h)\sim h^{3/2}.$ In this work we prove, that in fact the buckling load $\lambda(h)$ of cylindrical (not necessarily circular) shells under vertical compression depends on the curvature of the cross section curve. When the cross section is a convex curve with uniformly positive curvature, 
then $\lambda(h)\sim h,$ and when the the cross section curve has positive curvature except at finitely many points, 
then $C_1h^{8/5}\leq \lambda(h)\leq C_2h^{3/2}$ for $h$ small thickness $h>0.$ 

\end{abstract}

\tableofcontents

\section{Introduction}
\label{sec:1}

Thin-walled shells are, in general, highly efficient structures. In order to produce reliable designs and to avoid unexpected catastrophic failures, one needs to understand buckling. Buckling occurs in a thin structure under loading, when the structure undergoes an overall change in configuration instead of acting in the primary fashion intended by their designers, leading to the failure of the structures. Physically speaking, buckling in a thin shell occurs, when the shell can absorb a great deal of membrane strain energy without deforming too much but it must deform much more in order to absorb an equivalent amount of bending strain energy. When this stored energy is converted into bending energy, buckling occurs, creating a visible change in the geometry of the shell (typically in the form of several dimples) to accommodate all the energy, e.g., Figure 1. Mathematically speaking, buckling can be interpreted as the instability of the equilibrium state that for a certain load will have two possible trajectories to follow, i.e., when in the stress-strain (or stress-deformation) diagram a bifurcation occurs. This phenomenon is mathematically described as the loss of positivity of the second variation of the total energy of the system. In engineering it is essential to have a good estimate on the critical stress that will trigger buckling. In the present work we revisit the problem of buckling of cylindrical shells under axial compression. In that problem, one starts applying homogeneous load of magnitude $\lambda$ to the top of a cylindrical shell that is resting on a substrate, where $\lambda$ is increased continuously from zero. It is observed that at very small load magnitudes $\lambda,$ the cylindrical shell will undergo a homogeneous deformation with no visible geometric changes. Then at some critical value $\lambda=\lambda(h),$ the shell will buckle, producing a variety of deformation patterns, typically in the form of several (or single) dimples 
[\ref{bib:Yoshimura},\ref{bib:Bud.Hut.},\ref{bib:Bushnell},\ref{bib:Lan.Cal.Pal.},\ref{bib:Dog.Kli.Zim.Ode.Ara.},\ref{bib:Hor.Lor.Pel.},\ref{bib:Zhu.Man.Cal.}] shown in Figure 1\footnote{The second and the third cylinders are apparently deeper into the post-buckling regime.} The dimple (dimples) typically appear with a "click" and drop in the load magnitude (which corresponds to the bifurcation point), and disappear when unloading the shell. Some less common buckling patterns, such as formation of waves in the longitudinal direction or periodic-like wrinkling are also possible, see [\ref{bib:Xu.Mic.}] and Figure 14  of [\ref{bib:Xu.Mic.}] for more details.
\begin{figure}
\includegraphics[scale=0.42]{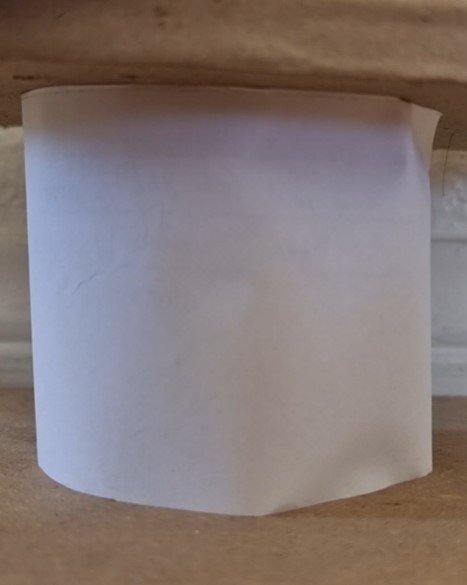}\quad
\includegraphics[scale=0.436]{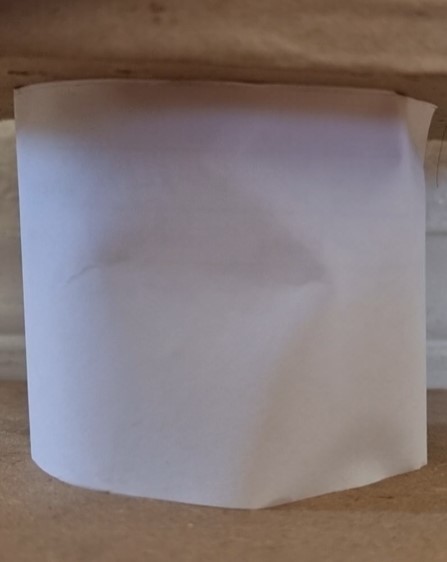}\quad
\includegraphics[scale=0.42]{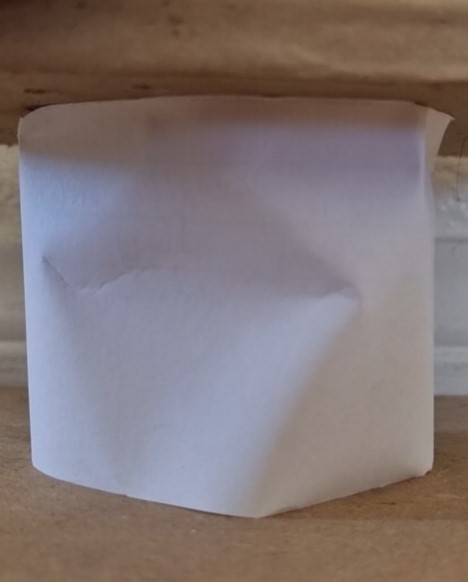}\quad
\caption{Buckled paper cylindrical shells under vertical load.}
\end{figure}

Despite being a very old problem with a lot of data available in the literature, (studies and experiments have been made since the mid-18th century), it still contains several unsolved puzzles even for the simplest geometry of perfectly circular cylindrical shells. For the case of circular cylindrical shells, it has been observed that the buckling load measured by experiments has a large discrepancy with the theoretical predictions made by the classical asymptotic formula 
[\ref{bib:Lorenz},\ref{bib:Timoshenko},\ref{bib:Tim.Woi.},\ref{bib:Koiter}]
\begin{equation}
\label{1.1}
\lambda(h)=\frac{E h}{\sqrt{3\left(1-\nu^{2}\right)}},
\end{equation}
which predicts a linear relation between the thickness of the cylinder and the critical buckling load $\Gl(h).$ Here $E$ and $\nu$ are the Young modulus and the Poisson ratio of the material, respectively, and $h=t/R$ is the shell dimensionless thickness, i.e., the ratio of the cylinder wall thickness $t$ to the cylinder radius $R$.  Note that in fact formula (\ref{1.1}) was first derived by Lorenz [\ref{bib:Lorenz}] in 1911 and independently by Timoshenko [\ref{bib:Timoshenko}] in 1914, but  sometimes in the literature it also carries the name of Koiter, as Koiter derived the so-called Koiter circle associated to (\ref{1.1}) in his Ph.D. thesis [\ref{bib:Koiter}] in 1945, see also [\ref{bib:Gra.Har.3}] for Koiter's circle. On the experimental side, a great deal of experiments since the 1930's show that the experimental critical stress is actually much lower than the theoretical formula (\ref{1.1}), scaling like $h^{3/2}$ with $h$, e.g., [\ref{bib:Lan.Cal.Pal.},\ref{bib:Zhu.Man.Cal.}]. It has been believed in the engineering and applied mathematics communities, that such a paradoxical behavior is in general due to the fact that the buckling load may be highly sensitive to shape or load imperfections [\ref{bib:Almroth},\ref{bib:Tennyson},\ref{bib:Wei.Mor.Sei.},\ref{bib:Gor.Eva.},\ref{bib:Yamaki},\ref{bib:Hun.Lor.Pel.},\ref{bib:Hun.Net.1},\ref{bib:Hun.Net.2},\ref{bib:Lor.Cha.Hun.}]. Note that \textit{general geometric symmetry breaking or (even a very small) preexisting deformation have been believed to be important factors in the asymptotics of the critical buckling load.} These questions have been addressed in the works [\ref{bib:Calladine},\ref{bib:Hor.Lor.Pel.},\ref{bib:Lan.Cal.Pal.},\ref{bib:Zhu.Man.Cal.}] in an attempt to resolve the paradox using mainly numerical approach and/or reduced shell theory equations. One possible gap in these approaches may be whether the utilized reduced shell theory equations are indeed applicable to the problem under consideration and capture the sought parameters within the acceptable error. This concern is based in particular on the works [\ref{bib:Fri.Jam.Mor.Mue.},\ref{bib:Hor.Lew.Pak.},\ref{bib:Lew.Mor.Pak.}], that reveal whether a specific reduced shell theory holds in a specific applied load and elastic energy regime. In the meantime the answer to those specific and important question is unknown. Another weakness was the presence of some heuristic arguments. On the other hand, on the rigorous side, Grabovsky and the first author rigorously proved in [\ref{bib:Gra.Har.3}] that indeed Koiter's asymptotic formula (\ref{1.1}) must hold in the case of perfect cylindrical shells and perfect axial homogeneous loading. This was achieved by the improvement and application of the "Thin structure buckling theory" by Grabovsky and Truskinovsky [\ref{bib:Gra.Tru.}]. Another crucial component of the analysis in [\ref{bib:Gra.Har.3}] was the derivation of the optimal asymptotic constants (not only the asymptotics, but also the leading term in it) in Korn and generalized Korn inequalities for circular cylindrical shells. Then Grabovsky and the first author went on to prove in [\ref{bib:Gra.Har.2}] that in fact if even a very small twist (in the angular direction) is present in the shell loading, then the asymptotics of the buckling load has to drop to $h^{5/4},$ see Table 1 below.

\begin{table}[h]
\centering
\resizebox{\textwidth}{!}{%
\begin{tabular}{|l|c|c|}
\hline
\multicolumn{1}{|c|}{Cross-section (C.-S.) and load} & Circular C.-S., vertical load & Circular C.-S., twist in the load \rule{0pt}{4ex}\\ 
 & $\bm\alpha(\theta)=(\cos(\theta),\sin(\theta))$,  $\Bt=-\lambda\bm e_z$ & $\bm\alpha(\theta)=(\cos(\theta),\sin(\theta))$,  $\Bt=-\lambda(\bm e_z+\epsilon \bm e_\theta)$ \rule{0pt}{2ex}\rule[-2ex]{0pt}{0pt}\\
 \hline
Buckling load asymptotics & $\lambda(h)=\frac{E h}{\sqrt{3\left(1-\nu^{2}\right)}}$ & $\lambda(h)=Ch^{5/4}$ \rule{0pt}{4ex}\rule[-3ex]{0pt}{0pt}\\
\hline
\end{tabular}
}\\
\caption{The dependence of the critical buckling load of circular cylindrical shells on the type of loading. Vertical load versus vertical load with a small twist.}
\end{table}
This analysis to some extent gave an explanation to the fact of sensitivity of the buckling load to load imperfections. Also, a somewhat less rigorous argument in [\ref{bib:Gra.Har.2}] demonstrated why one should expect the buckling load to drop to $h^{3/2}$ in the presence of some small dimples in the shell. 

Our task in the present work is the analysis of the "sensitivity to imperfections" problem for some other kind of shape imperfections, that are diversions from the perfect cylindrical shell. Namely, we will consider cylindrical shells, generated by cylindrical surfaces, that are non-circular but have convex cross sections: Two main families will be analyzed. 
\begin{itemize}
\item[(i)] The first family contains cylindrical surfaces with convex cross sections that have uniformly positive curvature (when regarded from the exterior of the curve). An illustrative example of such a curve is given in the left half of Figure 2. 
\item[(ii)] The second family contains cylindrical suraces with convex cross sections that have uniformly positive curvature (when regarded from the exterior of the curve) except from finitely many points on the curve, where the curvature vanishes. An illustrative example of such a curve is given in the right half of Figure 2. 
\end{itemize}

\begin{figure}
\includegraphics[scale=0.95]{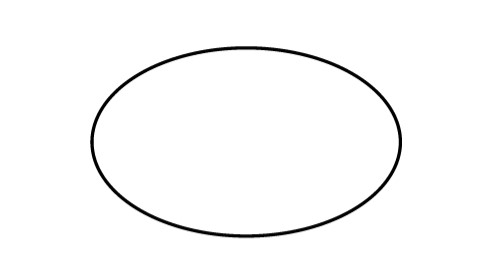}\ \ \ 
\includegraphics[scale=0.95]{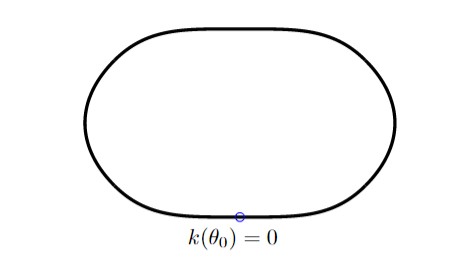}
\caption{Cylindrical surface cross-section curves with uniformly positive curvature (on the left), and uniformly positive curvature, except at finitely many points on the curve, where it vanishes (on the right), e.g., $k(\theta_0)=0.$ The curve on the right behaves like $(t,t^4)$ near the zero curvature points.}
\end{figure}

Note that in both cases the cylinder cross-section does not have to have any geometric symmetry property, but rather only has to be convex with some imposed curvature condition. We will prove that in case (i) the critical buckling load $\Gl(h)$ has the asymptotics $Ch$, and in case (ii) we will prove the bounds
 $C_1h^{8/5}\leq \Gl(h)\leq C_2h^{3/2}$ in the vanishing thickness regime $h\to 0.$ The result in part (i) in particular disproves the longstanding believe that  geometric symmetry breaking would lead to the drop in the asymptotics of $\Gl(h),$ i.e., there is an $\epsilon>0,$ such that $\Gl(h)\leq Ch^{1+\epsilon}$. Also, the result in part (ii) provides new evidence on how a geometric shape imperfection, which is a diversion from the perfect cylindrical shell may lead to the drop in the asymptotics of $\Gl(h)$ to at least $h^{3/2}.$ 

As already pointed out, we will be working in the framework of the (improved) "thin structure buckling theory" of Grabovsky and Truskinovsky [\ref{bib:Gra.Tru.}] rigorously derived from three-dimensional nonlinear hyper-elasticity. Some of the main components in the analysis will be asymptotically sharp Korn 
inequalities for the displacement gradient components, proven for the shells under consideration. 

The paper is organized as follows: In Section~2 we present a brief introduction to the general theory of slender structure buckling [\ref{bib:Gra.Tru.},\ref{bib:Gra.Har.2}]. 
In Section~3 we will be formulating the main results of the paper. In order to apply the theory in Section~2, on one hand one needs to determine 
(with some amount of proximity) the so-called "trivial branch", and the asymptotic stress tensor, which will be done in Section~4.1. Also, on the other hand one needs to prove asymptotically sharp Korn and Korn-type inequalities for the displacement gradient components, thus in Sections~4.2-4.3 we will formulate and prove those. Finally, in Section~4.4 we will prove the main results on the buckling load.

\section{Buckling of slender Structures}
\setcounter{equation}{0}
\label{sec:2}

For the convenience of the reader, this section is devoted to the presentation of the general theory "Buckling of slender structures" by Grabovsky and Truskinovsky [\ref{bib:Gra.Tru.}], that was later extended by Grabovsky and the first author in [\ref{bib:Gra.Har.2}]. In the presentation there will be some little amount of focus on the buckling of cylindrical shells, which is the subject of the paper. While we will try to keep the paper as self contained as possible, we will avoid going into the technical details and proofs referring the reader to the papers [\ref{bib:Gra.Tru.},\ref{bib:Gra.Har.2}] for details. 

\subsection{Loading, deformation, energy}
\label{sec:2.1}

In what follows we will be working in the framework of hyper-elasticity. Let $\Omega\subset\mathbb{R}^3$ be a bounded, simply-connected, and open set with a Lipschitz boundary, representing a hyper-elastic body in space, let $\bm t(\bm x)$ be dead traction applied to the boundary (or just some of it) of $\Omega.$ A resulting deformation $\bm y=\bm y (\bm x),\ (\bm x\in \Omega$) is said to be stable, if it is a weak local minimizer of the total energy of the system, that is given by the formula
\begin{equation}
\label{2.1}
E(\boldsymbol{y})=\int_{\Omega} W(\nabla \boldsymbol{y(\Bx)}) d \Bx -\int_{\partial \Omega} \boldsymbol{y} \cdot \boldsymbol{t}(\boldsymbol{x}) d S(\boldsymbol{x}),
\end{equation}
where $W(\BF)\colon\mathbb R^{3\times 3}\to\mathbb R$ is the elastic energy density function of the body. We will assume that $W$ is of class $C^3$ in some neighborhood of the identity matrix $\BI.$ Also, in hyper-elasticity, $W$ satisfies the following fundamental properties:

\begin{enumerate}[label={(P\arabic*)}]
    \item Positivity: $W(\BF)\geq 0$ for all $\BF\in\mathbb R^{3\times 3},$ and $W(\BI)=0.$
        \item Absence of prestress: $W_{\BF}(\boldsymbol{I})=\mathbf{0}.$ \\
      
        Note that this condition follows from (P1) and the fact that $W$ is $C^3$-regular at $\BI.$ However, this condition is traditionally mentioned as it has the mechanical meaning of the absence of prestress. 
    \item Frame indifference: $W( \boldsymbol{R}\boldsymbol{F})=W(\boldsymbol{F})$ for every $\boldsymbol{R} \in S O(3).$
    \item Local stability of the trivial deformation $\boldsymbol{y}(\boldsymbol{x})=\boldsymbol{x}:\left(\BL_{0} \boldsymbol{\xi}, \boldsymbol{\xi}\right) \geq 0$ for any 
    $\boldsymbol{\xi} \in \mathbb{R}^{3 \times 3},$ where $\BL_{0}=W_{\BF\BF}(\BI)$ is the linearly elastic tensor of material properties.
    \item Non-degeneracy: $\left(\BL_{0} \boldsymbol{\xi}, \boldsymbol{\xi}\right)=0$ if and only if $\boldsymbol{\xi}^{T}=-\boldsymbol{\xi}$.
\end{enumerate}
In addition to the above standard properties (P1)-(P5), we will assume that the material is isotropic, i.e., the energy density $W$ satisfies the additional property (P6) below.
\begin{itemize}
    \item[(P6)] Isotropy: $W(\boldsymbol{F}\BR)=W(\boldsymbol{F})$ for every $\boldsymbol{R} \in S O(3).$    
    \end{itemize}
Here, for a function $W(\BF)\colon\mathbb R^3\to\mathbb R,$ the symbols $W_{\BF}$ and $W_{\BF\BF}$ denote the gradient and the Hessian of $W$ respectively, i.e.,
$$
W_{\BF}(\BF)=\left(\frac{\partial W}{\partial f_{ij}}(\BF)\right)_{1\leq i,j\leq 3} \quad\text{and} \quad
W_{\BF\BF}(\BF)=\left(\frac{\partial^2 W}{\partial f_{ij}\partial f_{kl}}(\BF)\right)_{1\leq i,j,k,l\leq 3},
$$
where $\BF=(f_{ij})_{1\leq i,j\leq 3.}$ As it is known in mechanics, in the presence of isotropy homogeneous deformations are possible. We will also see later in Section~\ref{sec:4.1} how the assumption of isotropy substantially simplifies the job of proving the existence of a trivial branch. Here and in what follows, $(\boldsymbol{A}, \boldsymbol{B})$ is the Frobenius inner product of matrices $\BA=(a_{ij})_{i,j=1}^3,\BB=(b_{ij})_{i,j=1}^3\in\mathbb R^{3\times 3}:$ $(\boldsymbol{A}, \boldsymbol{B})=\sum_{i,j=1}^3 a_{ij}b_{ij}.$ In the theory of hyper-elasticity, one typically chooses for the admissible set $\cal A$ of the deformations $\By(\Bx)$ to be the subspace of functions that belong to some Sobolev space $W^{1,p}(\Omega)$ ($1<p$), which in addition satisfy some Dirichlet or/and natural Neumann boundary conditions on some complementary portions of $\partial\Omega$ yielded from the loading. Then the resulting deformation $\By(\Bx)$ must be a local minimizer of the energy $E(\By)$ in the admissible set $\cal A.$ A more detailed discussion on the deformations of slender structures is presented in the section to follow.

\subsection{Trivial branch}
\label{sec:2.2}

In the general theory of buckling of slender structures originated in [\ref{bib:Gra.Tru.}], one considers a sequence of \textit{slender} domains $\Omega_h$ parametrized by a small parameter $h$. The notion of slenderness used in this paper will be given a precise definition later in Section~2.3. In the case when $\Omega_h$ is a shell, the parameter $h$ typically represents the thickness of the shell, or the dimensionless thickness, i.e., the thickness divided by one of the in-plane parameters. In our case, $h$ will represent the thickness of the non-circular cylinders $\Omega_h$ under consideration, with constant height $L$. Consider a loading program 
\begin{equation}
\label{2.2}
\Bt(\Bx,h,\lambda)=\lambda \Bt^h(\Bx)+O(\lambda^2),\qquad \Bx\in \Gamma_1^h,
\end{equation}
applied to the part $\Gamma_1^h$ of the boundary of $\Omega_h,$ where $\lambda$ is the load magnitude and $\Bt^h$ is the load direction. 
For the problem of cylindrical shell axial compression, the part $\Gamma_1^h$ would be the top and the bottom parts of the boundary only. 
Assume for some $\lambda_0>0$ the loading (\ref{2.2}) results in a family of Lipschitz deformations $\By (\bm x;h,\lambda)\in W^{1,\infty}(\Omega_h,\mathbb R^3)$ for all $\lambda\in[0,\lambda_0],$ where as mentioned above, the field $\bm y (\bm x;h,\lambda)$ is a stable deformation for given boundary conditions, i.e., it is a weak local minimizer of the elastic energy 
$$E_{el.}(\boldsymbol{y})=\int_{\Omega} W(\nabla \boldsymbol{y(\Bx)}) d \Bx$$
in the admissible set 
\begin{equation}
\label{2.3}
\mathcal{A}_\lambda=\{ \By(\Bx,h,\lambda)\in W^{1,\infty}(\Omega_h,\mathbb R^3) \ : 
\ \By(\Bx,\lambda,h)=\Bg(\Bx,\lambda,h)  \ \text{on}\  \ \Gamma_1^h  \ \text{in the trace sence}\},
\end{equation}
for all $\lambda\in [0,\lambda_0],$ where the vector field $\Bg(\Bx,\lambda,h)\in W^{1,\infty}\left(\Omega_{h} ; \mathbb{R}^{3}\right)$ represents the boundary conditions on the part $\Gamma_1^h$ where the load is applied, and it is typically defined specifically for each problem and results from the loading program (\ref{2.2}). The family of deformations $\bm y (\bm x;h,\lambda)$ is then called a \textit{trivial branch}. Next, one defines the so-called \textit{linearly elastic trivial branch.}

\begin{definition}
\label{def:2.1}
A family of stable (within the admissible set $\mathcal{A}_\lambda$) Lipschitz deformations $\By(\Bx,h,\lambda)\in W^{1,\infty}(\Omega_h,\mathbb R^3)$ is called a \textbf{linearly elastic trivial branch,} if there exist $h_{0}>0,$ so that for every $h \in\left[0, h_{0}\right]$ and $\lambda \in\left[0, \lambda_{0}\right]$ one has:
\begin{enumerate}[label=(\roman*)]
\item $\bm y(\bm x; h,0)=\bm x.$ 
\item   There exist a family of Lipschitz functions $\boldsymbol{u}^{h}(\boldsymbol{x})$, independent of $\lambda$, such that
\begin{equation}
\label{2.4}
\left\|\nabla \boldsymbol{y}(\boldsymbol{x} ; h, \lambda)-\boldsymbol{I}-\lambda \nabla \boldsymbol{u}^{h}(\boldsymbol{x})\right\|_{L^{\infty}\left(\Omega_{h}\right)} \leq C \lambda^{2},
\end{equation}
\item 
\begin{equation}
\label{2.5}
\left\|\frac{\partial(\nabla \boldsymbol{y})}{\partial \lambda}(\boldsymbol{x} ; h, \lambda)-\nabla \boldsymbol{u}^{h}(\boldsymbol{x})\right\|_{L^{\infty}\left(\Omega_{h}\right)} \leq C \lambda.
\end{equation}
Here the constant $C>0$ is independent of $h$ and $\lambda$.
\end{enumerate}
\end{definition}
We remark that neither uniqueness nor general stability of the trivial branch are assumed. It is important to note that, here, the term general stability is stability without the boundary conditions in (\ref{2.3}). It is worth mentioning that as understood in [\ref{bib:Gra.Tru.}], usually the family $\By(\Bx,h,\Gl)$ is not stable in general due to the possibility of infinitesimal flips. This is always the case for cylindrical shell compression problems due to possible infinitesimal rotations in the cross-section plane. Additionally the leading order term $\Gl\bm u^h(\bm x)$ of the nonlinear trivial branch is nothing else but the linear elastic displacement, that can be calculated solving the equations of linear elasticity (the Lam\'e system) $\nabla \cdot\left(\BL_{0} e\left(\boldsymbol{u}^{h}\right)\right)=\mathbf{0}$ with the imposed boundary conditions, where $e\left(\Gl\boldsymbol{u}^{h}\right)=\frac{\Gl}{2}(\nabla \boldsymbol{u}^{h}+\left(\nabla \boldsymbol{u}^{h})^{T}\right)$ is the linear strain.

\subsection{The buckling load and buckling modes}

Buckling of the thin structure $\Omega^h$ occurs when the trivial branch $\By(\Bx ; h, \lambda)$ becomes unstable for some value $\lambda=\lambda_{crit}(h)$ for the first time. This happens because it becomes energetically more favorable to bend the structure rather than store more compressive energy. This bifurcation is detected by the change in the sign of the second variation of the energy:
\begin{equation}
\label{2.6}
\delta^{2}E(\BGf ; h, \lambda)=\int_{\Omega_{h}}\left(W_{F F}(\nabla \By(x ; h, \lambda)) \nabla \BGf, \nabla \BGf \right) d x, \qquad \BGf \in V_h,
\end{equation}
where $V_h\subset W^{1,\infty}(\Omega^h)$ is the vector space of admissible variations resulting from the loading program (\ref{2.2}), i.e., from the admissible set 
$\mathcal{A}_\lambda$ in (\ref{2.3}). Namely, this means that there exists $\lambda_{crit}(h)>0$ such that the second variation is non negative when $0<\lambda<\lambda_{crit}(h)$ for all test functions $\BGf  \in V_h,$ but for 
$\lambda>\lambda_{crit}(h)$ it can become negative for some choice of $\BGf.$ This observation leads to the following mathematical definition of the critical buckling load:
\begin{equation}
\label{2.7}
\lambda_{\text {crit }}(h)=\inf \left\{\lambda>0: \delta^{2} E(\BGf ; h, \lambda)<0 \text { for some } \BGf \in V_h\right\}.
\end{equation}
The body $\Omega_h$ is said to undergo a \textit{near-flip buckling}, if $\lim_{h\to 0} \lambda_{\text {crit }}(h)=0.$ A buckling mode is generally understood as a different from zero variation $\BGf^*_h\in V_h$, such that $\delta^{2} E\left(\BGf_{h}^{*}; h, \lambda_{crit}(h)\right)=0$. In practice one is only interested in the leading term asymptotics (or sometimes even in the scaling) of the buckling load in $h$ as $h\to 0.$ Note that if one perturbs $\Gl_{\text {crit }}(h)$ by a small factor $\epsilon,$ i.e., one replaces it by $(1+\epsilon)\Gl_{\text {crit }}(h),$ then the second variation changes approximately by $\epsilon\Gl_{\text {crit }}(h) \frac{\partial\left(\delta^{2} E\right)}{\partial \lambda}\left(\BGf_{h}^* ; h, \lambda_{\text {crit }}(h)\right)$. This means that second variations at two different variations $\BGf_1$ and $\BGf_2$ differing by an infinitesimal value compared to  
$\Gl_{\text {crit }}(h) \frac{\partial\left(\delta^{2} E\right)}{\partial \lambda}\left(\BGf_{h}^* ; h, \lambda_{\text {crit }}(h)\right)$ should not be distinguished. This observation leads to the following new definition of buckling loads and buckling modes in the broader sense. 
\begin{definition}
\label{def:2.2} 
We say that a function $\lambda(h) \rightarrow 0$, as $h \rightarrow 0$ is a buckling load if
$$
\lim _{h \rightarrow 0} \frac{\Gl(h)}{\Gl_{\text {crit }}(h)}=1.
$$
Similarly, a buckling mode is a family of variations $\BGf_{h} \in V_h \backslash\{0\},$ such that
$$
\lim _{h \rightarrow 0} \frac{\delta^{2} E\left(\BGf_{h} ; h, \Gl_{\text {crit }}(h)\right)}{\Gl_{\text {crit }}(h)\frac{\partial\left(\delta^{2} E\right)}{\partial \Gl}\left(\BGf_{h} ; h, \Gl_{\text {crit }}(h)\right)}=0
$$
\end{definition}
It turns out that under the conditions in Definition~\ref{def:2.1} [\ref{bib:Gra.Tru.}], the buckling load and buckling modes can be captured by the so called \textit{constitutively linearized second variation:}
\begin{equation}
\label{2.8}
\delta^{2} E_{\mathrm{cl}}(\BGf ; h, \Gl)=\int_{\Omega_h}\left(\left \langle\BL_{0} e(\BGf), e(\BGf)\right\rangle+\lambda\left\langle\BGs_{h}, \nabla \BGf^{T} \nabla \BGf\right\rangle\right) d\Bx, \qquad \BGf \in V_h,
\end{equation}
where $\BGs_{h}(\boldsymbol{x})=\BL_{0} e\left(\boldsymbol{u}^{h}(\boldsymbol{x})\right)$ is the linear elastic stress.
From the condition (P4) on $\BL_0,$ the first term in (\ref{2.8}) is always nonnegative, thus the potentially destabilizing variations will be in the set: 
\begin{equation}
\label{2.9}
V_h^d=\left\{\phi \in V_h :\left\langle\BGs_h, \nabla \BGf^{T} \nabla \BGf\right\rangle\leq 0\right\}.
\end{equation}
Further, the \textit{constitutively linearized critical strain} will be obtained by minimizing the Rayleigh quotient
\begin{equation}
\label{2.10}
\Re(h, \BGf)=-\frac{\int_{\Omega_{h}}\left\langle \BL_{0} e(\BGf), e(\BGf)\right\rangle d x}{\int_{\Omega_{h}}\left\langle\BGs_{h}, \nabla \BGf^{T} \nabla\BGf \right\rangle d x},
\end{equation}
over the set of destabilizing variations $V_h^d.$ One can formalize this in the following definition.
\begin{definition}
\label{def:2.3}
The constitutively linearized buckling load $\lambda_{\mathrm{cl}}(h)$ is defined by
\begin{equation}
\label{2.11}
\lambda_{\mathrm{cl}}(h)=\inf _{\BGf \in V_h^d} \Re(h, \BGf).
\end{equation}
The family of variations $\left\{\BGf_{h} \in V_h^d: h \in\left(0, h_{0}\right)\right\}$ is called a constitutively linearized buckling mode if
\begin{equation}
\label{2.12}
\lim _{h \rightarrow 0} \frac{\Re\left(h, \BGf_{h}\right)}{\Gl_{\mathrm{cl}}(h)}=1.
\end{equation}
\end{definition}

\begin{definition}
\label{def:2.4}
The body $\Omega_{h}$ is slender if $\lim _{h \rightarrow 0} K\left(\Omega_{h}\right)=0,$ where the Korn constant is defined as
$$
K\left(\Omega_h \right)=\inf _{\phi \in V_h} \frac{\|e(\BGf)\|_{L^{2}\left(\Omega_{h}\right)}^{2}}{\|\nabla \BGf\|_{L^{2}\left(\Omega_{h}\right)}^{2}}.
$$
\end{definition}

The following theorem proven in [\ref{bib:Gra.Har.2}] provides a formula for the buckling load and buckling modes. 

\begin{theorem}
\label{th:2.5}
Suppose that the body is slender in the sense of Definition~\ref{def:2.4}. Assume that the constitutively linearized critical load $\lambda_{\mathrm{cl}}(h),$ defined in (\ref{2.11}) satisfies $\lambda_{\mathrm{cl}}(h)>0$ for all sufficiently small $h$ and
$$
\lim _{h \rightarrow 0} \frac{\lambda_{\mathrm{cl}}(h)^{2}}{K\left(\Omega_{h}\right)}=0.
$$
Then $\lambda_{\mathrm{cl}}(h)$ is the buckling load and any constitutively linearized buckling mode $\BGf_{h}$ is a buckling mode in the sense of Definition \ref{def:2.2}.
\end{theorem}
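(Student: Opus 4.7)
The plan is to pass from the full second variation $\delta^2 E$ to the constitutively linearized one $\delta^2 E_{\mathrm{cl}}$ by Taylor-expanding $W_{\BF\BF}(\nabla\By(\Bx;h,\Gl))$ about $\BI$, then to compare the resulting Rayleigh quotients and show that the hypothesis $\Gl_{\mathrm{cl}}(h)^2/K(\Omega_h)\to 0$ forces $\Gl_{\mathrm{crit}}(h)/\Gl_{\mathrm{cl}}(h)\to 1$ and promotes any constitutively linearized buckling mode to a buckling mode in the sense of Definition~\ref{def:2.2}.

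First I would use the linearly elastic trivial branch bound (\ref{2.4}) to write $\nabla\By(\Bx;h,\Gl)=\BI+\Gl\,\nabla\Bu^h(\Bx)+\Gl^2\,\BR^h(\Bx;\Gl)$ with $\|\BR^h\|_{L^\infty(\Omega_h)}\leq C$ uniformly in $h$ and $\Gl\in[0,\Gl_0]$. Since $W\in C^3$ near $\BI$, Taylor expansion of $W_{\BF\BF}$ then gives
\[
W_{\BF\BF}(\nabla\By)=\BL_0+\Gl\,W_{\BF\BF\BF}(\BI)[\nabla\Bu^h]+\Gl^2\,\BE^h,\qquad \|\BE^h\|_{L^\infty(\Omega_h)}\leq C.
\]
Properties (P3) and (P5) reduce $(\BL_0\nabla\BGf,\nabla\BGf)$ to $(\BL_0 e(\BGf),e(\BGf))$; the algebraic identity obtained by differentiating the frame indifference $W(\BR\BF)=W(\BF)$ twice in $\BF$ and evaluating at $\BR=\BI$, together with $\BGs_h=\BL_0 e(\Bu^h)$ and $W_{\BF}(\BI)=\mathbf 0$, converts the $\Gl$-linear contribution to exactly $\Gl(\BGs_h,\nabla\BGf^T\nabla\BGf)$. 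Integrating over $\Omega_h$ I obtain the master identity
\[
\delta^2 E(\BGf;h,\Gl)=\delta^2 E_{\mathrm{cl}}(\BGf;h,\Gl)+\Gl^2\,Q_h(\BGf;\Gl),\qquad |Q_h(\BGf;\Gl)|\leq C\,\|\nabla\BGf\|_{L^2(\Omega_h)}^2,
\]
valid for all $\BGf\in V_h$ and $\Gl\in[0,\Gl_0]$, with $C$ independent of $h$.

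With the master identity and Korn's inequality in hand, the error is bounded by $\Gl^2|Q_h(\BGf;\Gl)|\leq C(\Gl^2/K(\Omega_h))\|e(\BGf)\|_{L^2(\Omega_h)}^2$, and (P4)--(P5) allow one to replace $\|e(\BGf)\|_{L^2}^2$ by $(\BL_0 e(\BGf),e(\BGf))$ up to a universal constant. The two-sided bound $\Gl_{\mathrm{cl}}(h)(1-o(1))\leq\Gl_{\mathrm{crit}}(h)\leq\Gl_{\mathrm{cl}}(h)(1+o(1))$ then follows: the upper bound by testing $\delta^2 E$ at $\Gl=(1+\Ge)\Gl_{\mathrm{cl}}(h)$ with a constitutively linearized buckling mode $\BGf_h$, for which $\delta^2 E_{\mathrm{cl}}(\BGf_h;h,(1+\Ge)\Gl_{\mathrm{cl}}(h))\leq -\Ge(1+o(1))\,(\BL_0 e(\BGf_h),e(\BGf_h))$ while the error is $o(1)\,(\BL_0 e(\BGf_h),e(\BGf_h))$; the lower bound by using the definition of $\Gl_{\mathrm{cl}}(h)$ to keep $\delta^2 E_{\mathrm{cl}}$ strictly positive on $V_h^d$ at $\Gl=(1-\Ge)\Gl_{\mathrm{cl}}(h)$ and again absorbing the error by the slenderness hypothesis. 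The buckling-mode claim of Definition~\ref{def:2.2} follows from the same estimates applied to $\delta^2 E$ and $\partial_\Gl\delta^2 E$ at $\Gl=\Gl_{\mathrm{crit}}(h)$, since the numerator is $o(1)(\BL_0 e(\BGf_h),e(\BGf_h))$ while the denominator is of order $(\BL_0 e(\BGf_h),e(\BGf_h))$.

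The main obstacle is the derivation of the master identity with the constant $C$ uniform in $h$. The delicate step is the algebraic rearrangement of the cubic form $(W_{\BF\BF\BF}(\BI)[\nabla\Bu^h]\nabla\BGf,\nabla\BGf)$ into the clean stress-work form $(\BGs_h,\nabla\BGf^T\nabla\BGf)$, which rests essentially on frame indifference and isotropy (P3), (P6); bounding the Taylor tail $\Gl^2 Q_h$ uniformly in $h$ in turn rests on the uniform $L^\infty(\Omega_h)$ bound for $\nabla\Bu^h$ implicit in the definition of the linearly elastic trivial branch. Once these $h$-uniform ingredients are secured, the remainder of the argument is a clean Rayleigh-quotient comparison driven entirely by the slenderness hypothesis $\Gl_{\mathrm{cl}}(h)^2/K(\Omega_h)\to 0$.
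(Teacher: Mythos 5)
The paper does not actually prove Theorem~\ref{th:2.5}; it is imported verbatim from Grabovsky--Harutyunyan [\ref{bib:Gra.Har.2}] with the phrase ``the following theorem proven in [\ref{bib:Gra.Har.2}]'', so there is no in-paper proof to compare against. Your strategy --- Taylor-expand $W_{\BF\BF}(\nabla\By)$ about $\BI$ using the trivial-branch bounds, rewrite the linear term via frame indifference, and absorb the remainder through Korn's constant --- is indeed the strategy of the cited reference, and the final comparison of Rayleigh quotients is run correctly.

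There is, however, a genuine flaw in your ``master identity.'' Writing $W(\BF)=\hat W(\BE)$ with $\BE=\tfrac12(\BF^T\BF-\BI)$, the exact second variation is
$\bigl(W_{\BF\BF}(\BF)\BH,\BH\bigr)=\bigl(\hat W_\BE(\BE),\BH^T\BH\bigr)+\bigl(\hat W_{\BE\BE}(\BE)\,[\BF^T\BH]^{\mathrm{sym}},[\BF^T\BH]^{\mathrm{sym}}\bigr)$,
and differentiating once in $\BF$ at $\BF=\BI$ in the direction $\nabla\Bu^h$ produces \emph{three} $\Gl$-linear pieces, not one: the term you keep, $\bigl(\BGs_h,\nabla\BGf^T\nabla\BGf\bigr)$; a cross term $2\bigl(\BL_0[\,(\nabla\Bu^h)^T\nabla\BGf\,]^{\mathrm{sym}},e(\BGf)\bigr)$; and a pure material term $\bigl(\hat W_{\BE\BE\BE}(0)[e(\Bu^h)]\,e(\BGf),e(\BGf)\bigr)$. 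Neither of the latter two cancels by frame indifference or isotropy, so the difference $\delta^2E-\delta^2E_{\mathrm{cl}}$ is \emph{not} of the form $\Gl^2 Q_h$ with $|Q_h|\le C\|\nabla\BGf\|^2$; it also contains contributions of order $\Gl\|e(\BGf)\|\,\|\nabla\BGf\|$ and $\Gl\|e(\BGf)\|^2$. Your stated error structure would make the ``delicate step'' of your last paragraph trivial, which is a signal something was lost.

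The conclusion survives, but you must estimate all three extra pieces: the $\Gl\|e\|^2$ term is $o(1)\|e\|^2$ because $\Gl_{\mathrm{cl}}(h)\to0$; the cross term is $\le C\Gl\,\|e\|\,\|\nabla\BGf\|\le C\bigl(\Gl/\sqrt{K(\Omega_h)}\bigr)\|e\|^2$, which is $o(1)\|e\|^2$ precisely because $\Gl_{\mathrm{cl}}^2/K(\Omega_h)\to0$; and the genuine $\Gl^2\|\nabla\BGf\|^2$ tail is $\le\bigl(\Gl^2/K(\Omega_h)\bigr)\|e\|^2=o(1)\|e\|^2$ by the same hypothesis. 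So the slenderness condition is used three times, not once, and the upper and lower bounds on $\Gl_{\mathrm{crit}}(h)$, as well as the buckling-mode claim, go through once this correction is made. You should either derive the exact three-term decomposition of the $\Gl$-linear contribution and bound each piece separately, or drop the claim that the $\Gl$-linear contribution equals $\Gl(\BGs_h,\nabla\BGf^T\nabla\BGf)$ exactly and instead state the weaker (and correct) estimate $\bigl|\delta^2E-\delta^2E_{\mathrm{cl}}\bigr|\le C\Gl\bigl(\|e(\BGf)\|^2+\|e(\BGf)\|\,\|\nabla\BGf\|+\Gl\|\nabla\BGf\|^2\bigr)$, from which the Rayleigh-quotient comparison follows as you outline.
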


Now, in order to study the problem of buckling of cylindrical shells under axial compression, we see from Theorem~\ref{th:2.5} and from the above formalism, that we need to gain appropriate information about the trivial branch associated to the axial compression, the associated stress tensor, the Korn constant, and the load 
$\lambda_{\mathrm{cl}}(h).$ This is done in the next sections.

\section{Problem setting and main results}
\setcounter{equation}{0}
\label{sec:3}

In this section we formulate the problems under consideration and the main results. Let $\bm\alpha(\theta)$ be a closed convex curve in the $XY$-plane that is parameterized by the arc length and has period/length $p$. Then the cylindrical surface $S$ having horizontal cross section as the curve $\BGa$ and height $L>0$ will be given by the formula 
$$S: \ \ \bm{r}(\theta,z)=\bm\alpha(\theta)+z\bm{e}_z,\quad  \theta\in[0,p],\  z\in[0,L].$$
Denote $\Be_\Gth=\BGa'(\Gth)$ and by $\Be_t$ the tangent and normal vectors to $\BGa$ respectively. This will give rise to the local orthonormal basis 
$(\Be_t,\Be_\Gth,\Be_z).$ We will be dealing with cylindrical shells $\Omega_h$ with mid-surface $S$ and thickness $h,$ i.e., 
\begin{equation}
\label{3.1}
\Omega_{h}=\left\{(t, \theta, z) \ : \ t \in [-h/2,h/2], \ \theta \in[0, p], \ z \in[0, L]\right\}.
\end{equation}
Denoting $k_\theta=\|\bm\alpha''\|$ the curvature of the cross-section $\BGa,$ we have that the gradient of any vector field 
$\BGf=(\phi_t,\phi_\Gth,\phi_z)\in H^1(\Omega_h,\mathbb R^3)$ will be given in the local basis $(\bm{e}_t,\bm{e}_\theta,\bm{e}_z)$ by 
\begin{equation}
\label{3.2}
\nabla \BGf
=\begin{bmatrix}
\phi_{t,t} & \frac{\phi_{t,\theta}-k_{\theta }\,\phi_\theta}{k_{\theta }\,t+1} & \phi_{t,z} \\[10pt]
\phi_{\theta,t} & \frac{\phi_{\theta,\theta}+k_{\theta }\,\phi_t}{k_{\theta }\,t+1} & \phi_{\theta,z}\\[10pt] 
\phi_{z,t} & \frac{\phi_{z,\theta}}{k_{\theta }\,t+1} & \phi_{z,z} 
\end{bmatrix},
\end{equation}
where $f_{,\eta}$ inside the gradient matrix denotes the partial derivative $\partial_\eta f.$ Assume that the shell $\Omega_h$ is resting on a substrate and is undergoing uniform and homogeneous vertical/axial loading\footnote{The terminology "axial load" will be used even if $\BGa$ does not have point-symmetry, and thus $\Omega_h$ will have no axis.} 
\begin{equation}
\label{3.3}
\Bt(\Bx,h,\lambda)=-\lambda \Be_z
\end{equation}
applied at the top of the shell. The main problem that we are concerned with is the determination of the asymptotics of critical buckling load $\Gl(h)$ in the thickness $h$ as $h\to 0.$ As already mentioned in Section~\ref{sec:1}, one has $\Gl(h)\sim h$ as $h\to 0$ in the case when the cross section $\BGa$ is a circle. We will be considering the cases when $\BGa$ has positive curvature everywhere ($k(\Gth)>0$ for all $\Gth\in [0,p]$), and the case when $\BGa$ has positive curvature 
everywhere except for finitely many points on the curve, at which the curvature has to vanish as $\BGa$ is convex and thus $k(\Gth)\geq 0$ for all $\Gth\in [0,p].$ 
Let us mention that in what follows the norm $\|f\|$ will be the $L^2$ norm $\|f\|_{L^2(\Omega_h)}.$ For the problem under consideration, the natural choice for the vector space $V_h$ will be the subspace of all displacements $\BGf\in H^1(\Omega_h)$ that vanish at the top and the bottom of the shell, i.e,  
\begin{equation}
\label{3.4}
V_h=\{\BGf\in H^1(\Omega_h) \ : \ \BGf(t,\Gth,0)=\BGf(t,\Gth,L)=0, \ \ (t,\Gth)\in [-h/2,h/2]\times [0,p] \}.
\end{equation}
This means that one shall study the stability of the trivial branch within the set of all Lipschitz deformations satisfying the same Dirichlet boundary conditions on the top and bottom of the cylinder. However, we will prove a stronger stability result, i.e., stability within a wider class of deformations. Note that if one only prescribes the values of the vertical component $y_z$ of the trivial branch $\By$ at the top and the bottom of the shell, then the cylinder may undergo flip instability through infinitesimal rotations in the cross section plane [\ref{bib:Gra.Har.2},\ref{bib:Gra.Tru.}]. Therefore one needs to impose some condition on $y_t$ or $y_\theta$ to rule this possibility out. Following [\ref{bib:Gra.Har.3}], we choose to impose zero integral condition on the tangential component $\phi_\Gth$ in the $z$ direction, which gives the alternative subspace
\begin{equation}
\label{3.4.1}
V_h^\Gth=\{\BGf\in H^1(\Omega_h) \ : \  \phi_z|_{z=0}=\phi_z|_{z=L}=\int_0^L\phi_\Gth(t,\Gth,z)dz=0, \ \ \forall (t,\Gth)\in [-h/2,h/2]\times [0,p] \}.
\end{equation}
Let us explain this choice. Observe that we can extend the cylinder to the lower half-space by mirror reflection about the plane $z=0,$ and consequently the components $\phi_t$ and $\phi_\Gth$ as even functions and $\phi_z$ as an odd function, preserving the structure of the symmetric gradient $e(\BGf).$ This would allow us to expand the components $\phi_t$ and $\phi_\Gth$ in Fourier space in cosine series and the $\phi_z$ component in sine series in the $z$ variable; see (\ref{4.19}) and the paragraph between (\ref{4.18}) and (\ref{4.19}). Then the zero integral condition simply means that the independent of $z$ term in the expansion of $\phi_\Gth$ is nonexistent. This would clearly prevent the cylinder from undergoing infinitesimal rotations in the cross section plane, which affects only the independent of $z$ variable terms in $\BGf.$ Note also, that unlike perfect circular cylindrical shells, where homogeneous deformations result in no change in the tangential component, (thus in [\ref{bib:Gra.Har.2}], the authors have prescribed the tangential component $\phi_\Gth$ at the top and the bottom of the cylinder too), homogeneous deformations in general cylindrical shells with any cross sections result in nonzero displacements in the tangential component too, thus a more relaxed subspace, such as (\ref{3.4.1}) has to be considered. We will prove the following theorems, where the vector space $V$ is either $V_h$ or $V_h^\Gth.$

\begin{theorem}
\label{th:3.1}
Assume $\BGa$ is a convex $C^2$ regular curve in the $XY$ plane and assume the cylindrical shell $\Omega_h$ is given as in (\ref{3.1}). Assume $\Omega_h$ is undergoing vertical loading as in (\ref{3.3}), and the admissible variations $\BGf$ belong to the subspace $V.$ The following statements hold:
\begin{itemize}
\item[(i)] If 
$$\min_{\Gth\in[0,p]}k_\Gth(\Gth)=k>0,$$
then one has 
$$\Gl(h)\sim h,$$
as $h\to 0.$
\item[(ii)]  Assume in addition that the cross section $\BGa$ is of class\footnote{Note that the higher regularity $C^5$ is only required for the Ansatz construction in (\ref{4.50}).} $C^5,$ and assume $k_\Gth(\Gth)>0$ except for finitely many points $\Gth_1,\Gth_2,\dots,\Gth_n\in (0,p),$ where one has $k_\Gth(\Gth_i)=0$ and $k_\Gth''(\Gth_i)\neq 0$\footnote{Due to the fact $k_\Gth\geq 0,$ we have $k_\Gth'(\Gth_i)=0$ for $i=1,2,\dots,n.$} for $i=1,2,\dots,n.$ This will imply quadratic growth of the curvature at the points $\Gth_i,$ i.e., 
\begin{equation}
\label{3.6}
c|\Gth-\Gth_i|^2\leq k_\Gth(\Gth)\leq \frac{1}{c}|\Gth-\Gth_i|^2,\qquad \Gth\in [0,p],\ i=1,2,\dots,n,
\end{equation}
for some constant $c>0.$ Then there exist constants $C_1,C_2>0,$ depending only on the cylinder height $L$ and the cross-section 
$\BGa,$ such that one has 
$$C_1h^{8/5}\leq \Gl(h)\leq C_2h^{3/2},$$
 as $h\to 0.$
\end{itemize}
\end{theorem}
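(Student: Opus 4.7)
The plan is to prove the two-sided bound by establishing the upper bound $\Gl(h)\leq C_2 h^{3/2}$ through an explicit trial variation, and the lower bound $\Gl(h)\geq C_1 h^{8/5}$ through an asymptotically sharp Korn-type inequality. By Theorem~\ref{th:2.5}, it suffices to bound the constitutively linearized load $\Gl_{\mathrm{cl}}(h)$ from (\ref{2.11}); the slenderness and the condition $\Gl_{\mathrm{cl}}(h)^{2}/K(\Omega_h)\to 0$ will be verified along the way using the Korn constant bound implicit in the lower-bound analysis.

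\medskip

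\noindent\emph{Upper bound.} I would construct a family $\BGf_h\in V_h^d$ concentrated in a window of width $\epsilon=\epsilon(h)\to 0$ around one of the degenerate points $\Gth_i$, with a sinusoidal $z$-profile of wavenumber $m=m(h)\to\infty$. In local coordinates $s=\Gth-\Gth_i$, set
\begin{equation*}
\phi_t=\psi_t(s/\epsilon)\cos(m\pi z/L),\quad \phi_\theta=\epsilon^{\alpha}\psi_\theta(s/\epsilon)\cos(m\pi z/L),\quad \phi_z=\epsilon^{\beta}m^{-1}\psi_z(s/\epsilon)\sin(m\pi z/L),
\end{equation*}
where $\psi_t,\psi_\theta,\psi_z$ are smooth compactly supported profiles on $[-1,1]$ and the exponents $\alpha,\beta$ are tuned so that the leading-order membrane entries $e_{\Gth\Gth}$ and $e_{zz}$ of (\ref{3.2}) cancel using the expansion $k_\Gth(\Gth)=\tfrac12 k_\Gth''(\Gth_i)s^{2}+O(s^{3})$ supplied by (\ref{3.6}); the $C^5$ hypothesis on $\BGa$ is invoked here to justify the higher-order metric expansion. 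Computing the strain entries from (\ref{3.2}), integrating in $t$ to produce the bending contribution of order $h^2$ times squared second derivatives, and balancing against the destabilizing $\|\BGf_{,z}\|^{2}$ term, I would optimize $\epsilon$ and $m$ to obtain $\Re(h,\BGf_h)\sim h^{3/2}$. This is the rigorous realization, enabled by the quadratic flatness at $\Gth_i$, of the heuristic dimple-like instability proposed for imperfect circular shells in [\ref{bib:Gra.Har.2}].

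\medskip

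\noindent\emph{Lower bound.} The key is a Korn-type inequality of the form
\begin{equation*}
\|\BGf_{,z}\|_{L^2(\Omega_h)}^{2}\leq C h^{-8/5}\|e(\BGf)\|_{L^2(\Omega_h)}^{2},\qquad \BGf\in V,
\end{equation*}
combined with the trivial-branch analysis of Section~4.1, which furnishes an asymptotic stress of the form $\BGs_h=-c_0\Be_z\otimes\Be_z+O(h)$. These together give
\begin{equation*}
-\int_{\Omega_h}\langle\BGs_h,\nabla\BGf^{T}\nabla\BGf\rangle\,dx\leq C\|\BGf_{,z}\|^{2}\leq C'h^{-8/5}\|e(\BGf)\|^{2},
\end{equation*}
so that $\Re(h,\BGf)\geq C_1 h^{8/5}$ uniformly on $V_h^d$. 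To prove the Korn-type inequality I would use the even/odd reflection across $z=0$ described after (\ref{3.4.1}) to expand the components in cosine/sine Fourier series in $z$, reducing the problem to a two-dimensional estimate on the cross-sectional strip at each axial frequency $n$. On sub-domains bounded away from the points $\Gth_i$, the uniform positivity of $k_\Gth$ yields a classical shell-Korn bound. On a neighborhood of each $\Gth_i$ of width $\delta$, the curvature-coupling mechanism is weakened by the factor $k_\Gth\sim s^{2}$; I would use the quadratic growth (\ref{3.6}) to estimate the strain defect inside the strip and patch with the off-strip estimate via a cutoff and a Poincaré-type correction. Optimizing $\delta=\delta(h)$ balances the two regimes and produces the exponent $8/5$.

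\medskip

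\noindent\emph{Main obstacle.} The hard part is the Korn-type inequality with the sharp exponent. The quadratic degeneracy of $k_\Gth$ at the points $\Gth_i$ destroys the uniformly-convex shell rigidity precisely at those locations, so one must interpolate between the uniformly-convex Korn bound valid on $\delta$-complements and a plate-like bound valid on the $\delta$-strips, and then optimize the cutoff scale in $h$. The residual gap between the Ansatz rate $h^{3/2}$ and the Korn rate $h^{8/5}$ indicates that at least one of the two bounds is not sharp—most likely the lower bound, whose sharpness would demand a finer mode decomposition of the admissible displacements than a single-cutoff interpolation can deliver.
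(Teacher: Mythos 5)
Your high-level plan tracks the paper's: reduce via Lemma~\ref{lem:4.1} and Theorem~\ref{th:2.5} to the quotient $\Gl(h)\sim\inf_{V}\|e(\BGf)\|^{2}/\|\mathrm{col}_3(\nabla\BGf)\|^{2}$ (the stress is in fact exactly $-E\,\Be_z\otimes\Be_z$, with no $O(h)$ correction, since the trivial branch is homogeneous), construct an Ansatz localized at a flat point for the upper bound, and prove an ansatz-free Korn-type inequality for the lower bound by reflection, Fourier decomposition in $z$, and a $\delta$-strip interpolation around the zeros of $k_\Gth$. However, the lower-bound step as you describe it has a concrete gap. A single $h$-dependent cutoff scale $\delta(h)$, followed by Young-type optimization, only delivers the full-gradient bound $\|\nabla\BGf\|^{2}\leq Ch^{-12/7}\|e(\BGf)\|^{2}$ (cf. the first inequality in (\ref{3.7})), not the sharper $\|\mathrm{col}_3(\nabla\BGf)\|^{2}\leq Ch^{-8/5}\|e(\BGf)\|^{2}$ that your lower bound requires. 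In the paper the exponent $8/5$ is obtained only after making the cutoff depend on the axial wavenumber, $\delta=h^{1/7}m^{-2/7}$, and splitting into the cases $m\geq h^{-1/5}$ (where the crude bound $m^{2}\|\phi_t\|^{2}\leq Cm^{-2}h^{-2}\|\BG^{\mathrm{sym}}\|^{2}$ suffices) and $m\leq h^{-1/5}$ (where the $m$-dependent strip estimate is used); without this two-tier, $m$-adapted argument your proposal proves only $\Gl(h)\geq C_1 h^{12/7}$, strictly weaker than the statement.

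Two smaller but material omissions: the chain you describe does not close as written — you need the universal Korn interpolation inequality (\ref{4.27}), $\|\nabla\BGf\|^{2}\leq C\bigl(h^{-1}\|e(\BGf)\|\,\|\phi_t\|+\|\BGf\|^{2}+\|e(\BGf)\|^{2}\bigr)$, which (with the $V$-boundary conditions and a preliminary dichotomy $\|\phi_t\|\lessgtr\|e(\BGf)\|$) yields $\|\BG\|^{2}\leq Ch^{-1}\|\BG^{\mathrm{sym}}\|\,\|\phi_t\|$ and links the strip estimate on $\|\phi_t\|$ back to the gradient. And the phrase ``the uniform positivity of $k_\Gth$ yields a classical shell-Korn bound'' on the strip complement is misleading: there $\min k_\Gth\sim\delta^{2}$, so that estimate degrades like $\delta^{-4}$ — this degradation is exactly what enters the optimization and should be tracked. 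Your upper-bound sketch is fine in spirit; the paper's (\ref{4.43}) is a Kirchhoff-type Ansatz localized in both $\Gth$ and $z$ at scale $\delta=h^{1/4}$, which uses $k\sim\Gth^{2}$ to get $\|e(\BGf)\|^{2}\sim h^{5/2}$, $\|\mathrm{col}_3(\nabla\BGf)\|^{2}\sim h$, and hence $h^{3/2}$, without needing exact membrane cancellation.
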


A remark is in order. 

\begin{remark}
\label{rem:3.2}
Part (i) of Thereom~\ref{th:3.1} implies that no matter what the geometry of the cross section $\BGa$ is, the critical buckling load $\Gl(h)$ will scale like $h$ as $h\to 0$
as long as $\BGa$ has strictly positive curvature everywhere. This in particular means that the buckling load asymptotics in the vanishing thickness is independent of the fact whether $\BGa$ has any kind of symmetry or not. Hence, \textbf{the critical buckling load asymptotics is not sensitive
to the initial symmetry of the undeformed configuration.} Part (ii) provides some evidence for the phenomenon that the buckling load may drop to $h^{3/2}$ if the cylinder has some zero longitudinal curvature sections. 
\end{remark}

In order to prove Theorem~\ref{th:3.1}, we will need the following Korn and Korn-like inequalities that can be considered as part of the main results of the paper. 

\begin{theorem}
\label{th:3.3}
Let the cylindrical shell $\Omega_h$ be as in Theorem~\ref{th:3.1}. 
The following statements hold:
\begin{itemize}
\item[(i)] If 
$$\min_{\Gth\in[0,p]}k_\Gth(\Gth)=k>0$$
then there exist a constant $\tilde h>0,$ depending only on $L, \max k_\Gth,$ and $k,$ such that
\begin{equation}
\label{3.5}
\inf_{\Bu\in V}\frac{\|e(\BGf)\|^2}{\|\mathrm{col}_3(\nabla\BGf)\|^2}\sim h,
\end{equation}
for all $h\in (0,\tilde h).$
\item[(ii)] Assume the conditions in part (ii) of Theorem~\ref{th:3.1} are satisfied. Then there exist constants $C_1,C_2,C_3,C_4,\tilde h>0,$ depending only on 
$L,\max k_\Gth,$ and $c,$ such that the Korn and Korn-like inequalities hold:
\begin{equation}
\label{3.7}
C_1h^{12/7}\leq \inf_{\BGf\in V}\frac{\|e(\BGf)\|^2}{\|\nabla \BGf\|^2}\leq C_2h^{5/3},\quad 
\quad  C_3h^{8/5}\leq \inf_{\BGf\in V}\frac{\|e(\BGf)\|^2}{\|\mathrm{col}_3(\nabla\BGf)\|^2}\leq C_4h^{3/2},
\end{equation}
for all $h\in (0,\tilde h).$ These results are presented in the schematic table 2 below.
\end{itemize}

\end{theorem}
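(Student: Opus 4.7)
The plan is to begin with an $L^2$-Fourier decomposition in the $z$-variable. Using the even/odd extension described just after (\ref{3.4.1}), one writes $\phi_t,\phi_\theta$ as cosine series and $\phi_z$ as a sine series in $z\in[0,L]$, so that the Rayleigh quotients in (\ref{3.5}) and (\ref{3.7}) split across Fourier modes with vertical wave numbers $\omega_n=n\pi/L$. In each mode the denominator $\|\mathrm{col}_3(\nabla\phi)\|^2$ becomes $\omega_n^2\|(\phi_t^n,\phi_\theta^n,\phi_z^n)\|^2$, while $\|\nabla\phi\|^2$ picks up in addition the $t$- and $\theta$-derivatives visible in (\ref{3.2}). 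The problem thus reduces to a family of two-dimensional Korn-type estimates on $[-h/2,h/2]\times[0,p]$ parameterized by $\omega_n$; global lower bounds come from a mode-uniform inequality, and upper bounds from a single-mode Ansatz.

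For part~(i), I would use $k_\theta\geq k>0$ exactly as in the circular case of [\ref{bib:Gra.Har.3}]. The component $e_{\theta\theta}=(\phi_{\theta,\theta}+k_\theta\phi_t)/(1+k_\theta t)$ gives $L^2$-control of $\phi_t$ up to $\|\phi_{\theta,\theta}\|$ and $\|e(\phi)\|$; $e_{zz}$ controls $\omega_n\phi_z$; and the off-diagonals $e_{t\theta},e_{\theta z}$ chain these together, producing after integration by parts and cross-term absorption a mode-uniform bound $\|e(\phi)\|^2\geq Ch\|\mathrm{col}_3(\nabla\phi)\|^2$ with $C$ depending only on $L,K,k$. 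The matching upper bound comes from a Koiter-type single-mode Ansatz with vertical wavelength of order $\sqrt{h}$, adapted from the circular case; the uniform positivity of $k_\theta$ keeps all the coefficients in the construction controlled.

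For part~(ii), the lower bound requires a partition of unity in $\theta$ with localization windows $I_i^\delta=(\theta_i-\delta,\theta_i+\delta)$ around each vanishing point. Outside these windows one has $k_\theta\gtrsim\delta^2$ by (\ref{3.6}), so the part~(i) argument applies with a negative-power-of-$\delta$ penalty; inside, only plate-type Korn inequalities are available, losing a factor $h^2$ against $\|\nabla\phi\|^2$ and a factor $h$ against $\|\mathrm{col}_3(\nabla\phi)\|^2$. Optimizing $\delta=\delta(h)$ against each denominator separately is what produces the two different exponents $12/7$ and $8/5$. For the upper bound I would build the Ansatz of (\ref{4.50}) concentrated in a window of width $\ell$ around one $\theta_i$, taking $\phi_z\propto\cos(\omega_n z)\chi((\theta-\theta_i)/\ell)$ with $\phi_t,\phi_\theta$ determined by requiring the dominant membrane components of $e(\phi)$ to cancel; balancing $\ell,\omega_n,h$ against the quadratic vanishing $k_\theta(\theta)\sim(\theta-\theta_i)^2$ yields the exponents $5/3$ and $3/2$. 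The $C^5$ regularity of $\boldsymbol{\alpha}$ is exactly what is needed to Taylor-expand $k_\theta(\theta)=\tfrac12 k_\theta''(\theta_i)(\theta-\theta_i)^2+O(|\theta-\theta_i|^3)$ and to construct smooth correctors that annihilate the leading membrane strains.

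The main obstacle will be the lower bound in part~(ii): the sharp exponents $12/7$ and $8/5$ emerge only from a careful interpolation between the positive-curvature regime, where the part~(i) estimate degrades as a negative power of $\delta$, and the plate regime inside the windows, where one inherits only the classical plate Korn exponent. Choosing $\delta$ as a specific power of $h$ that is guided by the upper-bound Ansatz, and closing the argument uniformly in the Fourier parameter $\omega_n$ so as to control the small-$n$ modes where $\omega_n$ itself is of order $h^{\alpha}$, is the delicate part; in particular, the two denominators see different trade-offs between tangential and normal derivatives, which is why a single optimization produces two distinct exponents.
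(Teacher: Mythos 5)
Your part~(i) sketch matches the paper's proof in spirit: Fourier decomposition in $z$ via even/odd extension, control of $\phi_t$ through $e_{\theta\theta}$ and an integration by parts in $\theta$ (giving $\|\sqrt{k_\theta}\phi_t\|^2$), combined with the Korn and Korn interpolation inequalities of [\ref{bib:Gra.Har.4}], then a Kirchhoff-type Ansatz with $\sqrt h$ scaling for the matching upper bound. Your description of the part~(ii) Ans\"atze is also broadly right; the paper uses a localized Kirchhoff Ansatz of width $h^{1/4}$ for the $h^{3/2}$ bound and a linearized-in-$t$ Ansatz (\ref{4.44})--(\ref{4.50}) of width $h^{1/6}$ for the $h^{5/3}$ bound, both exploiting $k_\theta(\theta)\sim(\theta-\theta_i)^2$.

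The genuine gap is in the part~(ii) lower bound. First, your claimed plate exponents are not right: for a flat plate of thickness $h$ the Kirchhoff Ansatz $\phi_t=W(\theta,z)$, $\phi_\theta=-tW_\theta$, $\phi_z=-tW_z$ gives $\|e(\BGf)\|^2\sim h^3$ and $\|\mathrm{col}_3(\nabla\BGf)\|^2\sim h$, so the third-column quotient for a plate is $h^2$, not $h$; plugging $h$ into your interpolation scheme would predict the wrong final exponent. Second, and more important, the paper never invokes plate Korn inside the window. Instead it bounds $\|\phi_t\|$ on $I_0=[-\delta,\delta]$ by a Poincar\'e inequality for $\phi_t\varphi$ with a cutoff $\varphi$ supported in $[-2\delta,2\delta]$, giving $\|\phi^0_t\|^2\leq C\delta^2\|\BG\|^2+4\|\phi^1_t\|^2$ (inequality (\ref{4.33})); this is combined with the weighted estimate on $I_1$ carrying a $\delta^{-4}$-type penalty (inequality (\ref{4.32})), and with the universal Korn interpolation inequality (\ref{4.27}), which is applied \emph{globally} on $\Omega_h$ --- not piecewise --- under the nontrivial case reduction $\|e(\BGf)\|\leq\|\phi_t\|$, producing $\|\BG\|^2\lesssim h^{-1}\|\BG^{\mathrm{sym}}\|\,\|\phi_t\|$. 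There is no gluing of local Korn inequalities, which would create cutoff-error terms in $\|e(\BGf)\|$ that your proposal does not account for. Finally, a single optimization of $\delta(h)$ is not enough: the paper optimizes two free parameters $\delta,\eta$, and for the $h^{8/5}$ col-3 bound it must take the mode-dependent choice $\delta=h^{1/7}m^{-2/7}$ and split on $m\gtrless h^{-1/5}$, using the elementary extra bound $m^2\|\phi_t\|^2\lesssim m^{-2}h^{-2}\|\BG^{\mathrm{sym}}\|^2$ to handle the large-$m$ regime. You correctly flag the small-$m$ modes as delicate, but the proposal gives no mechanism to handle them.
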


\begin{remark}
\label{rem:3.4}
It is proven in [\ref{bib:Gra.Har.4}, Theorem~3.2] that under the assumptions in part (i) of Theorem~\ref{th:3.3}, Korn's first inequality holds:
\begin{equation}
\label{3.8}
\|\nabla\BGf\|^2\leq\frac{C}{h^{3/2}}\|e(\BGf)\|^2,
\end{equation}
for all vector fields $\BGf\in V_h$ and all $h\in (0,\tilde h).$ This will be a useful component in the proof of part (i) of Theorem~\ref{th:3.1}.
\end{remark}



\begin{table}[h]
\centering
\resizebox{\textwidth}{!}{%
\begin{tabular}{|l|c|c|}
\hline
\multicolumn{1}{|c|}{Cross-section (C.-S.) and load} & C.-S. with uniformly positive curvature, vertical load & C.-S. with positive curvature, except at finitely many points, vertical load \rule{0pt}{4ex}\rule[-1.2ex]{0pt}{0pt} \\
 & $k(\theta)>0,$ $\Bt=-\lambda\bm e_z$ & $k(\theta)\geq 0$ and $k(\theta_n)= 0 $ 
 for finitely many $\theta_n$, $\Bt=
 -\lambda\bm e_z$ \rule{0pt}{1ex}\rule[-3ex]{0pt}{0pt}\\ 
 \hline
\multicolumn{1}{|c|}{Buckling load asymptotics}  & $\lambda(h)=Ch$ &$Ch^{8/5}\leq\lambda(h)\leq Ch^{3/2}$ \rule{0pt}{4ex}\rule[-3ex]{0pt}{0pt}\\ \hline
\end{tabular}%
}\\
\caption{The dependence of the critical buckling load of convex cylindrical shells on the cross-section curvature. Cross-sections with uniformly positive curvature versus cross-sections with nonnegative curvature, that vanishes only at finitely many points on the curve.}
\end{table}



\section{Proof of the main results}
\label{sec:4}



\subsection{The trivial branch and the stress tensor}
\setcounter{equation}{0}
\label{sec:4.1}

In this section we will demonstrate how one can apply the theory presented in Section~\ref{sec:2} to the problem of axial compression of cylindrical shells with any convex cross-section. First we will calculate the trivial branch $\By(\Bx,\lambda,h)$ resulting from the compression $\Bt(\Bx,\lambda,h)=-\lambda \Be_z$
at the top of the shell. The vector field $\Bu^h$ then will be obtained, which will yield a formula for the linear elastic stress tensor $\BGs_h$, thus the minimization problem (\ref{2.11}) will be identified, which will be studied in the next section. The compression problem is equivalent to the boundary-value problem, where one prescribes the displacement at the top and the bottom of the shell and at the same time solving the system of Euler-Lagrange equations (equations of equilibrium). The boundary conditions on the vertical component of $\By$ yielding from the axial compression will be $y_z(t,\Gth,0,\Gl,h)=0$ at the bottom and $y_z(t,\Gth,L,\Gl,h)=(1-\lambda)L$ at the top. It is natural to seek the trivial branch among homogeneous deformations, yielding homogeneous thickening of the shell in the cross-section plane. Those are clearly given by 
\begin{equation}
\label{4.1}
y_t=(1+a(\lambda))(t+\bm\alpha(\theta)\cdot \bm e_t(\theta)), \qquad y_\theta=(1+a(\lambda))\bm\alpha(\theta)\cdot \bm e_\theta(\theta), \qquad y_z=(1-\lambda)z, 
\end{equation}
where $\bm y(t,\theta,z)=y_t \bm e_t+y_\theta \bm e_\theta+y_z\bm e_z,$ and $a(\lambda)$ is a smooth enough function satisfying $a(0)=0,$ and will be determined by the equations of equilibrium and the natural boundary conditions yielding from the energy minimization problem. For the sake of notation simplicity, we may abbreviate $\bm y(t,\theta,z;h,\lambda)=\bm y(t,\theta,z)=\bm y(\bm x)$ while keeping in mind that the trivial branch can depend on $h$ and $\lambda$. The plan is to prove the existence of a trivial branch given as in (\ref{4.1}). It is easy to see that when minimizing the elastic energy $\int_{\Omega_h} W(\nabla \bar \By)d\Bx$ subject to boundary conditions $\bar \By(\Bx)$=$\By(\Bx)$ at $z=0,L$ for $\By$ as in (\ref{4.1}), then any local minimizer $\bar \By\in H^1(\Omega_h)$ must satisfy the equations of equilibrium 
\begin{equation}
\label{4.2}
\nabla\cdot \bm P(\nabla \bar\By(\Bx))=0, \qquad \Bx\in \Omega_h,
\end{equation}
together with the natural boundary conditions 
\begin{equation}
\label{4.3}
\bm P(\nabla \bar\By) \Be_t=0 \qquad\text{at}\qquad t=\pm h/2,
\end{equation}
where $\boldsymbol{P}(\boldsymbol{F})=W_{\boldsymbol{F}}(\boldsymbol{F})$  is the Piola-Kirchhoff stress tensor. Also, note that if we do not prescribe the $t$ or $\Gth$ component of the field $\bar y$ at $z=0,L,$ then we will get additional natural boundary conditions such as 
\begin{align}
\label{4.4}
&\bm P (\nabla\bar\By)\Be_z\cdot \Be_t=0    \qquad\text{at}\qquad z=0,L,\\ \nonumber
&\bm P (\nabla\bar\By)\Be_z\cdot \Be_\Gth=0    \qquad\text{at}\qquad z=0,L,
\end{align}
respectively. The existence of the trivial branch is proved in the following Lemma:

\begin{lemma}
\label{lem:4.1}
Assume that $W(\bm F)$ is three times continuously differentiable in a neighborhood of the identity matrix $\bm F=\BI$ and satisfies the properties (P1)-(P6). Then there exists a constant $\Gl_0>0$ and a unique function $a(\lambda)\in C^2([0,\Gl_0],\mathbb R)$, such that $a(0)=0$ and the family $\By(\Bx)=(y_t,y_\Gth,y_z)$ 
given by (\ref{4.1}) satisfies the equations of equilibrium (\ref{4.2}) and all of the boundary conditions (\ref{4.3})-(\ref{4.4}). Moreover, the trivial branch $\By(\Bx)$ also fulfills all the conditions in Definition~\ref{def:2.1} as is required for the general theory to apply. 

\end{lemma}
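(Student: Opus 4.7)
The plan is to exploit the isotropy assumption (P6), together with the fact that the ansatz (\ref{4.1}) is affine in Cartesian coordinates, to reduce the problem to a single scalar equation in $(a,\Gl)$ which can then be solved by the implicit function theorem.

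First, I would parameterize the reference point as $\Bx=(X,Y,Z)$ with $(X,Y)=t\Be_t(\Gth)+\BGa(\Gth)$ and $Z=z$; since $(\Be_t,\Be_\Gth)$ is an orthonormal basis of the $XY$ plane at each $\Gth$, the ansatz (\ref{4.1}) reads in Cartesian coordinates simply
$$\By(\Bx)=\bigl((1+a(\Gl))X,\,(1+a(\Gl))Y,\,(1-\Gl)Z\bigr),$$
so $\nabla\By=\BF(a,\Gl):=\mathrm{diag}(1+a,\,1+a,\,1-\Gl)$ is a constant diagonal matrix. Constancy of $\BF$ makes the equilibrium equations (\ref{4.2}) automatic. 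Writing $\BP(\BF)=\BF\,\BS(\BF^T\BF)$ with $\BS$ the second Piola--Kirchhoff stress, which is an isotropic function of $\BF^T\BF$ by (P3) and (P6), and noting that when $\BF^T\BF$ is diagonal so is $\BS$, one obtains
$$\BP(\BF(a,\Gl))=\mathrm{diag}\bigl(p_1(a,\Gl),\,p_1(a,\Gl),\,p_3(a,\Gl)\bigr),\qquad p_1,p_3\in C^2,$$
where the first two diagonal entries agree because $\BF$ is invariant under the $90^\circ$ rotation in the $XY$ plane.

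Given this diagonal structure, the boundary conditions reduce drastically: (\ref{4.3}) becomes $\BP\Be_t=p_1\Be_t=\Bzr$, i.e.\ the single scalar equation $p_1(a,\Gl)=0$, while both conditions in (\ref{4.4}) hold automatically since $\BP\Be_z=p_3\Be_z$ is orthogonal to both $\Be_t$ and $\Be_\Gth$. I would then apply the implicit function theorem at $(a,\Gl)=(0,0)$. Property (P2) gives $p_1(0,0)=0$. For the derivative, expand $\BP(\BI+\bm\eta)=\BL_0\bm\eta+O(|\bm\eta|^2)$ with $\bm\eta=\mathrm{diag}(a,a,-\Gl)$ symmetric; since (P3) and (P6) together force $\BL_0$ to be isotropic, on symmetric matrices it has the Lam\'e form $\BL_0\bm\eta=2\mu_L\bm\eta+\Gl_L(\mathrm{tr}\,\bm\eta)\BI$, whence
$$\partial_a p_1(0,0)=2(\mu_L+\Gl_L).$$
Applying (P4) and (P5) to $\bm\xi=\mathrm{diag}(1,1,0)$ yields $(\BL_0\bm\xi,\bm\xi)=4(\mu_L+\Gl_L)>0$ (strict since $\bm\xi$ is not skew), so the implicit function theorem produces a unique $C^2$ function $a(\Gl)$ on some interval $[0,\Gl_0]$ with $a(0)=0$ and $p_1(a(\Gl),\Gl)\equiv 0$.

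Finally, to verify Definition~\ref{def:2.1}, condition (i) is immediate from $a(0)=0$. For (ii) and (iii), set $\Bu^h(\Bx)=(a'(0)X,\,a'(0)Y,\,-Z)$, independent of $h$, with $\nabla\Bu^h=\mathrm{diag}(a'(0),a'(0),-1)$; the Taylor expansion $a(\Gl)=a'(0)\Gl+O(\Gl^2)$ then gives
$$\|\nabla\By-\BI-\Gl\,\nabla\Bu^h\|_{L^\infty(\Omega_h)}\leq C\Gl^2,\qquad \|\partial_\Gl(\nabla\By)-\nabla\Bu^h\|_{L^\infty(\Omega_h)}\leq C\Gl,$$
uniformly in $h$. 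The only nontrivial step I anticipate is the verification of the invertibility hypothesis $\partial_a p_1(0,0)\neq 0$, which through isotropy reduces to the standard Lam\'e-parameter inequality $\mu_L+\Gl_L>0$ coming from (P4)--(P5); everything else is an elementary consequence of the implicit function theorem and Taylor's formula.
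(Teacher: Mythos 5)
Your proof is correct and follows essentially the same route as the paper: exploit isotropy and frame indifference to see that the Piola--Kirchhoff stress at the homogeneous deformation is diagonal, so the equations of equilibrium are automatic and the boundary conditions (\ref{4.3})--(\ref{4.4}) collapse to a single scalar equation $p_1(a,\Gl)=0$, which is then solved by the implicit function theorem using (P2) at the base point and (P4)--(P5) for invertibility, after which Definition~\ref{def:2.1} is verified by Taylor expansion of $a(\Gl)$. Your treatment of the nondegeneracy condition is in fact a bit more careful than the paper's: you explicitly write $\partial_a p_1(0,0)=2(\mu_L+\Gl_L)$ via the Lam\'e form of $\BL_0$ and derive the sign from (P4)--(P5) applied to $\mathrm{diag}(1,1,0)$, whereas the paper condenses this to the somewhat schematic statement $\BL_0\Be_t\cdot\Be_t\neq 0$.
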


\begin{proof}
For the proof we adopt the strategy in [\ref{bib:Gra.Har.2}]. First of all note that as homogeneous deformations always satisfy the equations of equilibrium, we only have to verify the boundary conditions in (\ref{4.3}) and (\ref{4.4}). Letting $\bm F=\nabla\bm y$ and $\bm C= \bm F^T\bm F$, we have by the isotropy (P6) that $W(\bm F)=\tilde W\left(\bm C\right)$ for some function $\tilde W$ that is three times continuously differentiable in a neighborhood of the identity matrix $\boldsymbol{I}$. We have by simple algebra for the Piola-Kirchhoff stress tensor the formula
\begin{equation}
\label{4.5}
\boldsymbol{P}(\boldsymbol{F})=W_{\boldsymbol{F}}(\boldsymbol{F})=2 \boldsymbol{F} \tilde{W}_{\BC}\left(\boldsymbol{C}\right).
\end{equation}
Taking into account the form given in (\ref{4.1}) and Frenet?Serret formulas $\bm\alpha''=-k_\theta \Be_t,$ $\bm e'_t=k_\theta\BGa',$ we have 
\begin{align*}
    y_{t,t}&=1+a(\lambda),\quad y_{t,\theta}=k_\theta(1+a(\lambda))\bm\alpha\cdot \bm \alpha', \quad y_{t,z}=0,\\
    y_{\Gth,t}&=0,\quad y_{\theta,\theta}=(1+a(\lambda))(1+\bm\alpha\cdot \bm\alpha''),\quad y_{\Gth,z}=0,\\
    y_{z,t}&=0,\quad y_{z,\Gth}=0,\quad y_{z,z}=1-\lambda.
\end{align*}
Consequently, recalling the formula (\ref{3.2}), we have that the gradient $\nabla \By$ in the curvilinear coordinates $\bm{e}_t,\bm{e}_\theta,\bm{e}_z$ is represented as:
\begin{equation}
\label{4.6}
\nabla \By=\begin{bmatrix} y_{t,t} & \frac{y_{t,\theta}-k_{\theta }\,y_\theta}{k_{\theta }\,t+1} & y_{t,z} \\[10pt]
y_{\theta,t} & \frac{y_{\theta,\theta}+k_{\theta }\,y_t}{k_{\theta }\,t+1} & y_{\theta,z}\\[10pt]
y_{z,t} & \frac{y_{z,\theta}}{k_{\theta }\,t+1} & y_{z,z} \end{bmatrix}=
 \begin{bmatrix}
1+a(\Gl) & 0 &0 \\[10pt]
 0 & 1+a(\Gl) & 0 \\[10pt]
  0 & 0 & 1-\Gl
\end{bmatrix},
\end{equation}
and thus we can calculate
\begin{equation}
\label{4.7}
\BC=\begin{bmatrix}
(1+a(\Gl))^2 & 0 &0 \\[10pt]
 0 & (1+a(\Gl))^2  & 0 \\[10pt]
 0 & 0 & (1-\Gl)^2
\end{bmatrix}.
\end{equation}
This clearly shows that the matrices $\BF$ and $\BC=\BF^T \BF$ are diagonal. Additionally, the frame indifference condition (P3) implies that $\tilde{W}\left(\bm R \bm C \bm R^T\right)=\tilde W(\bm C)$ for all $\bm R \in S O(3)$. Differentiating this relation in $\bm R$ at $\boldsymbol{R}=\boldsymbol{I}$ one concludes that 
$\tilde{W}_{\boldsymbol{C}}(\boldsymbol{C})$ commutes with $\boldsymbol{C}$ for any $\BF\in\mathbb R^{3\times 3}$ and $\BC=\BF^T \BF;$ see [\ref{bib:Gur.Fri.Ana.}, page 290] for details. Consequently if $\BC$ is diagonal with distinct diagonal entries, then $\tilde{W}_{\boldsymbol{C}}(\boldsymbol{C})$ must be diagonal as well, while if $\BC$ is diagonal with possibly equal diagonal entries, then small diagonal perturbations will reduce the situation to the first case, and thus  
$\tilde{W_\BC}(\BC)$ will be diagonal too as long as $\BC$ is so. Hence, as in our case the matrix $\BC$ is diagonal, also $\tilde{W_\BC}(\BC)$ and the Piola-Kirchhoff stress tensor $\bm P(\bm F)$ given in (\ref{4.5}) have to be diagonal as well. 
Therefore the boundary conditions in (\ref{4.4}) are automatically satisfied, and the one in (\ref{4.3}) reduces to the single equation
\begin{equation}
\label{4.8}
\hat{W}_{\BC}\left((1+a(\lambda))^{2}\left(\Be_{t} \otimes \Be_{t}+\Be_{\theta} \otimes \Be_{\theta}\right)+(1-\lambda)^{2} \Be_{z} \otimes \Be_{z}\right)
 \Be_{t} \cdot \Be_{t}=0, 
\end{equation}
the solvability of which is basically guaranteed by the implicit function theorem. Indeed, the absence of prestress condition (P2) and (\ref{4.5}) for $\BF=\BI$ imply that equality (\ref{4.8}) is fulfilled at the point $(0,a(0))=(0,0).$ For the nonzero condition in the implicit function theorem at $\lambda=0$ we get 
$$\BL_0\bm e_t\cdot \bm e_t\neq 0,$$
which is fulfilled due to (P5). Consequently, the implicit function theorem guarantees the existence of a $C^2$ smooth $a(\lambda)$ function in some neighborhood of 
$\lambda=0,$ which completes the proof of the existence part of Lemma~\ref{lem:4.1}. Finally we calculate
$$\Bu^h(\Bx)=\frac{\partial \By(\Bx;h,\lambda)}{\partial \lambda}\bigg\vert_{\lambda=0}=
a'(0)(t+\BGa\cdot\Be_t)\bm e_t+(a'(0)\BGa\cdot\Be_\Gth) \bm e_\Gth-z\bm e_z,$$
which does not depend on $h$ and thus by the $C^2$ smoothness of the function $a(\lambda)$ we conclude that the conditions (\ref{2.4}) and (\ref{2.5})
are fulfilled as well. 

\end{proof}

Now in order to identify the minimization problem (\ref{2.11}), we need to calculate the linear elastic stress tensor. We have differentiating (\ref{4.6}) in $\lambda$ at $\lambda=0,$ that 
$$
\nabla \Bu^h(x)=
\begin{bmatrix} 
a'(0) & 0 & 0\\[10pt]
0 & a'(0) & 0\\[10pt] 
0&0&-1
\end{bmatrix},
$$
thus we get 
$$e(\Bu^h(x))=
\begin{bmatrix} 
a'(0) &  0 & 0\\[10pt]
 0 & a'(0)& 0\\[10pt] 
 0 &0 &-1
 \end{bmatrix},
 $$
 where $a'(0)=\nu$ is the Poisson's ratio. As the material is isotropic we have for the 
 linear elastic stress tensor $\BGs_h$ the formula $\Gs_h^{ij}=\frac{2\mu\nu}{1-2\nu}\delta_{ij} \mathrm{Tr}(e(\Bu^h))+2\mu e_{ij}(\Bu^h),$ where $\mu$ is the Lam\'e parameter. This leads to the form 
\begin{equation}
\label{4.9} 
\BGs_h=
 \begin{bmatrix} 
0 & 0& 0\\[10pt]
 0 & 0 & 0\\[10pt]
 0 & 0 & -E
 \end{bmatrix},
 \end{equation}
where $E$ is the Young's modulus. Now we can identify the set $V_h^d$ of destabilizing variations given in (\ref{2.9}), which will be the set all variations $\BGf\in V_h$ such that 
$$-\int_{\Omega^h}(\BGs_h,\nabla\BGf^T\nabla\BGf)d\Bx=E(\left\|\phi_{t,z}\right\|^{2}+\left\|\phi_{\theta,z}\right\|^{2}+\left\|\phi_{z,z}\right\|^{2})\geq 0,$$
i.e., it coincides exactly with $V_h.$ Finally, the minimization problem in (\ref{2.11}) to be studied reduces to 

\begin{equation}
\label{4.10}
\Gl_{cl}(h)=\inf_{\BGf\in V_h} \frac{\int_{\Omega_h} \left\langle \BL_{0} e(\BGf), e(\BGf)\right\rangle d\Bx}
{E\cdot \|\mathrm{col}_3(\nabla\BGf)\|^{2}},
\end{equation}
which will be addressed in Section~\ref{sec:4.4}.



\subsection{Korn's Inequalities: Ansatz free lower bounds}

\begin{proof}[Proof of Theorem~{\ref{th:3.3}}]
\textbf{Part (i).} In what follows all the constants $C>0$ and $C_i>0$ will depend only on $L,\max k_\Gth,k,$ and $c.$ While we will do the proof for both spaces $V_h$ and $V_h^\Gth$ in parallel, we will provide additional details to address the more specific case $V=V_h^\Gth$ when needed. The tools and strategies developed in [\ref{bib:Gra.Har.1},\ref{bib:Gra.Har.4},\ref{bib:Harutyunyan.1}] will be adopted. Namely, we introduce the so-called simplified gradient $\BG,$ which is obtained by inserting $t=0$ in the denominators of the second column of (\ref{3.2}):
\begin{equation}
\label{4.11}
\BG=\begin{bmatrix} \phi_{t,t} & \phi_{t,\theta}-k_{\theta }\phi_\theta & \phi_{t,z}       \\[10pt]
\phi_{\theta,t} & \phi_{\theta,\theta}+k_{\theta }\phi_t& \phi_{\theta,z}      \\[10pt] 
\phi_{z,t} & \phi_{z,\theta}  & \phi_{z,z} \end{bmatrix}.
\end{equation}
For the sake of simplicity we will be proving all the estimates in Theorem~\ref{th:3.3} with the gradient $\nabla\BGf$ replaced by the simplified gradient $\BG$ first, and then we will make the reverse step by virtue of the obvious bounds 
 \begin{equation}
\label{4.12}
\|\BG^{sym}-e(\BGf)\|\leq \|\BG-\nabla\BGf\|\leq h\|\nabla \BGf\|\quad \text{for all}\quad \BGf\in H^1(\Omega_h),
\end{equation}
where $\BG^{sym}=\frac{1}{2}(\BG+\BG^T)$ is the symmetric part of $\BG.$ We aim to prove the following bounds 
\begin{equation}
\label{4.13}
\left\|\phi_{t, z}\right\|^{2} \leq \frac{C}{h}\|e(\BGf)\|^{2},\qquad
\left\|\phi_{\theta, z}\right\|^{2} \leq \frac{C}{\sqrt{h}}\|e(\BGf)\|^{2},\qquad
\left\|\phi_{z, z}\right\|^{2} \leq C\|e(\BGf)\|^{2},\quad\text{for all}\quad \BGf\in V.
\end{equation}
First of all note that by density we can without loss of generality assume that the field $\BGf$ is of class $C^2.$ For the $zz$ component we have the obvious estimate 
\begin{equation}
\label{4.14}
\|\phi_{z,z}\|\leq \|e(\BGf)\|.
\end{equation}
For the sake of simplicity, denote for any functions $f,g\in H^1(\Omega_h)$ the inner product $(f,g)=\int_{\Omega_h}fg.$ In order to estimate the 
second component of the third column, we can integrate by parts using the boundary conditions on the component $\phi_z$ in $z,$ and periodicity in $\Gth:$
\begin{align*}
\|\phi_{\Gth,z}\|^2&=(\phi_{\Gth,z},2\BG^{sym}_{\Gth z}-\phi_{z,\Gth})\\ 
&=2(\phi_{\Gth,z},\BG^{sym}_{\Gth z})-(\phi_{\Gth,z},\phi_{z,\Gth})\\
&=2(\phi_{\Gth,z},\BG^{sym}_{\Gth z})-(\phi_{\Gth,\Gth},\phi_{z,z})\\
&=2(\phi_{\Gth,z},\BG^{sym}_{\Gth z})-(\BG^{sym}_{\Gth\Gth}-k_\Gth \phi_{t},\BG^{sym}_{zz}),
\end{align*}
thus we obtain by the Cauchy-Schwartz inequality the estimate 
\begin{equation}
\label{4.15}
\|\phi_{\Gth,z}\|^2\leq 6\|\BG^{sym}\|(\|\BG^{sym}\|+\|k_\Gth\phi_t\|)\leq C\|\BG^{sym}\|(\|\BG^{sym}\|+\|\phi_t\|).
\end{equation}
Next we recall the following Korn and Korn interpolation inequalities proven in [\ref{bib:Gra.Har.4}, Thorems 3.1 and 3.2]:
\begin{equation}
\label{4.16}
\|\nabla \BGf\|^{2} \leq \frac{C}{h^{3/2}}\|e(\BGf)\|^2,\quad \|\nabla \BGf\|^{2} \leq C\|e(\BGf)\|\left(\frac{\left\|\phi_{t}\right\|}{h}+\|e(\BGf)\|\right),
\quad\text{for all}\quad \BGf\in V_h.
\end{equation}
Note that the inequalities in (\ref{4.16}) were derived in [\ref{bib:Gra.Har.4}] under the boundary conditions in $V_h,$ thus we will need to prove similar to (\ref{4.16}) estimates for the vector space $V_h^\Gth$ too. To that end, we invoke the following universal Korn interpolation inequality proven in [\ref{bib:Harutyunyan.1}, Theorem~3.1]. The estimate holds for any shells $\Omega_h$ with bounded principal curvatures and for any vector fields $\BGf\in H^1(\Omega_h)$ (even without any boundary conditions):
\begin{equation}
\label{4.17}
\|\nabla\BGf\|^2\leq C\left(\frac{1}{h}\|e(\BGf)\|\cdot\|\phi_t\|+\|\BGf\|^2+\|e(\BGf)\|^2\right),\qquad\text{for all}\qquad \BGf\in H^1(\Omega_h).
\end{equation} 
Observe that $z$ and $\Gth$ components of fields $\BGf\in V_h^\Gth,$ satisfy Poincar\'e inequality in the $z$ direction, thus keeping in mind (\ref{4.14}) and (\ref{4.15}), we obtain from (\ref{4.17}) the simplified estimate 
\begin{equation}
\label{4.18}
\|\nabla\BGf\|^2\leq C\left(\frac{1}{h}\|e(\BGf)\|\cdot\|\phi_t\|
+\|\phi_t\|^2+\|e(\BGf)\|^2\right),\qquad\text{for all}\qquad \BGf\in V.
\end{equation}
Note next that (\ref{4.18}) combined with (\ref{4.12}) and the Cauchy inequality implies the bound 
\begin{equation}
\label{4.19}
\|e(\BGf)\|\leq 2\|\BG^{sym}\|+ Ch^{1/2}\|\phi_t\|,
\end{equation}
thus we get from (\ref{4.18}) an analogous estimate 
\begin{equation}
\label{4.20}
\|\nabla\BGf\|^2\leq C\left(\frac{1}{h}\|\BG^{sym}\|\cdot\|\phi_t\|
+\frac{\|\phi_t\|^2}{h^{1/2}}+\|\BG^{sym}\|^2\right), \qquad\text{for all}\quad \BGf\in V.
\end{equation}
In order now to get a sharp estimate on the $z$ derivative of $\phi_t,$ we extend the cylinder $\Omega_h$ to the lower half-space by mirror reflection and accordingly the $t$ and $\Gth$ components of vector field $\BGf$ as even functions, and the $z$ component as an odd function, which is possible due to the imposed zero boundary conditions. The extended version are denoted by $\bar\Omega_h$ and $\bar \BGf,$ where we clearly have $\bar\BGf\in H^1(\bar\Omega_h).$ 
The point is that this extensions simply double all the norms under consideration, thus we can prove all the inequalities under consideration for the extended fields. 
The functions $\bar\phi_t,$ $\bar\phi_\Gth$ and $\bar\phi_z$ can then be written in Fourier space in the $z$ variable in $H^{1}:$ 
\begin{equation}
\label{4.21}
\begin{cases}
\bar \phi_t=\sum_{m=0}^\infty \bar \phi_t^m(t,\Gth)\cos(\frac{\pi mz}{L}),\\
\bar \phi_\Gth=\sum_{m=0}^\infty \bar \phi_\Gth^m(t,\Gth)\cos(\frac{\pi mz}{L}),\\
\bar \phi_z=\sum_{m=0}^\infty \bar \phi_z^m(t,\Gth)\sin(\frac{\pi mz}{L}).
\end{cases}
\end{equation}
Observe that in each norm under consideration the Fourier modes separate, thus we can prove all the inequalities under consideration for a fixed Fourier mode with a
wavenumber $m \geq 0.$ If $m=0,$ then we have $\phi_{t,z}^0=0$ and $\phi_{\Gth,z}^0=0,$ thus (\ref{4.13}) holds. Assuming $m>0,$ we have $\|\bar \phi_{\Gth,z}\|=m\|\bar\phi_{\Gth}\|,$
thus (\ref{4.15}) implies the bound 
\begin{equation}
\label{4.22}
m^2\|\bar\phi_{\Gth}\|^2\leq C\|\bar \BG^{sym}\|(\|\bar \BG^{sym}\|+\|\bar \phi_t\|).
\end{equation}
Next we have $e(\bar \BGf)_{\theta\theta}=\bar\BG_{\Gth\Gth}=\bar\phi_{\theta,\theta}+k(\theta)\bar\phi_t$, thus we have integrating by parts in $\Gth$ and utilizing periodicity: 
\begin{align*}
\int_{\Omega_h} k(\theta) \bar\phi_t^2&= \int_{\Omega_h} \bar\BG^{sym}_{\theta\theta} \bar\phi_t+\int_{\Omega_h}  (\bar\phi_\theta \bar\phi_t)_{,\theta}
+\int_{\Omega_h} \bar\phi_\theta \bar\phi_{t,\theta}\\
&=\int_{\Omega_h} \bar\BG^{sym}_{\theta\theta} \bar\phi_t+\int_{\Omega_h} \bar\phi_\theta \bar\phi_{t,\theta}\\
\end{align*}
We can substitute $\bar\phi_{t,\theta}=\bar\BG_{t\theta}+k(\theta)\bar\phi_\theta$ to get
\begin{equation}
\label{4.23}
\norm{\sqrt{k(\theta)}\bar\phi_t}^2\leq C\int_{\Omega_h} |\bar\phi_t \bar\BG^{sym}_{\theta\theta}|+|\bar\phi_\theta \bar\BG_{t\theta}|+|\bar\phi_\theta|^2.
\end{equation}
Observe that (\ref{4.22}), (\ref{4.23}) and an application of the Cauchy inequality imply the bound
\begin{equation}
\label{4.24}
\norm{\bar\phi_t}^4 \leq C\left(\norm{\bar\BG^{sym}}^4+\norm{\bar\phi_\theta}^2\norm{\bar\BG}^2\right).
\end{equation}
Note that (\ref{4.12}) implies that analogous to (\ref{4.20}) inequalities hold for $\nabla\BGf$ replaced by $\BG$, thus owing to (\ref{4.24}), (\ref{4.22}) and the new version off (\ref{4.20}), we discover 
\begin{equation}
\label{4.25}
\norm{\bar\phi_t}^2\leq C\left(\norm{\bar\BG^{sym}}^2+\frac{1}{m\sqrt{h}}\norm{\bar\phi_t}\norm{\bar\BG^{sym}}\right).
\end{equation}
It remains to note that, upon an application of the Cauchy inequality, (\ref{4.25}) implies the desired bound
\begin{equation}
\label{4.26}
\|\bar \phi_{t,z}\|^2=m^2\norm{\bar \phi_t}^2\leq \frac{\norm{\bar \BG^{sym}}^2}{h}.
\end{equation}
Observe that on one hand (\ref{4.26}) implies that $\norm{\bar \phi_t}\leq \frac{\norm{\bar \BG^{sym}}}{\sqrt{h}}$, thus we get from (\ref{4.20}) that
\begin{equation}
\label{4.27}
\|\nabla\BGf\|^2\leq \frac{C}{h\sqrt{h}}\|\BG^{sym}\|^2 \qquad\text{for all}\quad \BGf\in V.
\end{equation}
Next, owing back to (\ref{4.12}), we derive from (\ref{4.27}) the analogous estimate 
\begin{equation}
\label{4.28}
\|\nabla\BGf\|^2\leq \frac{C}{h\sqrt{h}}\|e(\BGf)\|^2 \qquad\text{for all}\quad \BGf\in V.
\end{equation}
On one hand, it remains to note that (\ref{4.28}) combined with (\ref{4.12}), (\ref{4.14}), (\ref{4.15}), and (\ref{4.26}) imply the bound 
\begin{equation}
\label{4.29}
\|\mathrm{col}_3(\nabla\BGf)\|^2\leq\frac{C}{h}\|e(\BGf)\|^2,\qquad\text{for all}\qquad \BGf\in V,
\end{equation}
which confirms one direction in (\ref{3.5}). The proof of the other direction is by an Ansatz construction and is postponed until Section~\ref{sec:4.4}. We now turn to the second part, where the curvature $k(\Gth)$ vanishes at finitely many points.\\
\textbf{Part (ii). } We will prove the Ansatz-free lower bounds
\begin{equation}
\label{4.30}
\|\mathrm{col}_3(\nabla\BGf)\|^2\leq\frac{C}{h^{8/5}}\|e(\BGf)\|^2,\qquad\|\nabla\BGf\|^2\leq\frac{C}{h^{12/7}}\|e(\BGf)\|^2,\qquad\text{for all}\qquad \BGf\in V.
\end{equation}
In the sequel we will be working with the extended vector field $\bar\BGf,$ but will drop the "bar" to keep the notation simpler. Assume first that the wavenumber $m$ is nonzero, i.e., $m\in\mathbb N.$ Observe that if $\norm{\phi_t}\leq \norm{e(\BGf)}$ then (\ref{4.18}) would imply (\ref{4.30}). We assume in the sequel that 
 \begin{equation}
 \label{4.31}
 \norm{e(\BGf)}\leq\norm{\phi_t}.
\end{equation}
We can assume without loss of generality that the domain of the $\Gth$ variable is $[-1,1].$ Also, for the simplicity of the presentation, we will assume that there is only one point on the curve $\BGa$ where the curvature vanishes, the general case being analogous. Consequently, assume $n=1$ and $\Gth_1=0$ in (\ref{3.6}). 
Let $\delta>0$ be a small parameter yet to be chosen and let $I_0= [-\delta,\delta], I_1=[-1,1]-I_0$, and $\BGf^i=\BGf\chi_{I_i}$, $i=0,1.$ 
Recall that (\ref{4.23}) implies 
\begin{equation}
\label{4.32}
\|\sqrt{k(\theta)}\phi_t\|^2\leq C\left( \norm{\phi_t}\cdot \norm{\BG^{sym}}+ \frac{\norm{\phi_\theta}^2}{\epsilon}+\epsilon\norm{\BG}^2\right),
\end{equation}
for any $\epsilon\in (0,\infty)$. From the obvious bound $k(\Gth)\leq C\sqrt{k(\Gth)}$ and inequality (\ref{4.15}) we have 
\begin{equation}
\label{4.33}
\norm{\phi_\theta}^2\leq \frac{C}{m^2}(\norm{\BG^{sym}}^2+\norm{\BG^{sym}}\cdot \|{\sqrt{k(\theta)}}\phi_t\|).
\end{equation}
Therefore combining (\ref{4.32}) and (\ref{4.33}) we arrive at
\begin{align*}
\|\sqrt{k(\theta)}\phi_t\|^2
&\leq C\left(\norm{\phi_t}\cdot \norm{\BG^{sym}}+\frac{\norm{\BG^{sym}}^2+\norm{\BG^{sym}}\cdot \|{\sqrt{k(\theta)}}\phi_t\|}{m^2\epsilon}
+\epsilon\norm{\BG)}^2\right),
\end{align*}
and applying Young's inequality again we arrive at the key estimate
\begin{equation}
\label{4.34}
\|\sqrt{k(\theta)}\phi_t\|^2
\leq C\left( \norm{\phi_t}\cdot \norm{\BG^{sym}}+\frac{\norm{\BG^{sym}}^2}{m^2\epsilon}+\frac{\norm{\BG^{sym}}^2}{m^4\epsilon^2}+\epsilon\|\BG\|^2\right).\\
\end{equation}
Next we utilize (\ref{4.34}) to bound $\norm{\phi_t^1}^2.$ We have by (\ref{3.6}) that $\min_{I_1} k(\theta)=k_\delta\geq c\delta^2,$ thus substituting $\epsilon=\delta^2 \eta^2$ we get from (\ref{4.34}) the bound 
\begin{align}
\label{4.35}
    \norm{\phi_t^1}^2&\leq \frac{1}{k_\delta}\|\sqrt{k(\theta)}\phi_t\|^2\\ \nonumber
 &\leq \frac{\|\phi_t\|^2}{50}+C\left(\left(\frac{1}{\delta^4}+\frac{1}{m^2\delta^4 \eta^2}+
 \frac{1}{m^4\delta^6\eta^4}\right)\norm{\BG^{sym}}^2+\eta^2\|\BG\|^2\right).   
 \end{align}
In order to bound $\norm{\phi_t^0}^2$ we choose a smooth cut-off function $\varphi\colon [-1,1]\to\mathbb R$ supported in $[-2\delta,2\delta]$ such that 
$$
\varphi(\theta)=\begin{cases} 1, &\theta\in I_0,\\
					0, &\theta\in [1,-1]-2I_0, \\
|\varphi'(\theta)|\leq\frac{2}{\delta}, & \theta\in [1,-1].	      
\end{cases}
$$
We have by the Poincare inequality that  
\begin{align}
\label{4.36}
\norm{\phi^0_t}^2 &\leq \norm{\phi_t \varphi}^2\\  \nonumber
&\leq \delta^2 \norm{\partial_\Gth(\phi_t \varphi)}^2\\  \nonumber
&\leq \delta^2\left(\norm{\BG \chi_ {2I_0}}^2+\norm{k(\theta) \phi_\theta \chi_ {2I_0}}^2+\frac{4}{\delta^2}\norm{\phi_t\chi_ {I_1}}^2\right)\\ \nonumber
&\leq C\delta^2\left(\norm{\BG}^2+\norm{\phi_\theta}^2\right)+4\norm{\phi^1_t}^2\\ \nonumber
&\leq C\delta^2 \norm{\BG}^2+4\norm{\phi^1_t}^2,
\end{align}
where we used the obvious bound $\|\phi_\Gth\|\leq C\|\BG\|.$
Putting now (\ref{4.35}) and (\ref{4.36}) together we arrive at 
\begin{equation}
\label{4.37}
\norm{\phi_t}^2\leq C (\delta^2+\eta^2)\norm{\BG}^2+C\left(\frac{1}{\delta^4}+\frac{1}{m^2\delta^4 \eta^2}+
 \frac{1}{m^4\delta^6\eta^4}\right)\norm{\BG^{sym}}^2
\end{equation}
Observe that the universal interpolation inequality in (\ref{4.18}) together with the bounds (\ref{4.14}), (\ref{4.22}), and (\ref{4.31}) imply another key estimate: 
\begin{equation}
\label{4.38}
\|\BG\|^2\leq\frac{C}{h}\|\BG^{sym}\|\|\phi_t\|.
\end{equation}
In order to estimate $\|\BG\|,$ we first combine (\ref{4.37}) and (\ref{4.38}) to get by an application of the Cauchy inequality:
\begin{equation}
\label{4.39}
\norm{\phi_t}^2\leq C \left(\frac{\delta^4+\eta^4}{h^2}+\frac{1}{\delta^4}+\frac{1}{m^2 \delta^4 \eta^2}+
 \frac{1}{m^4\delta^6\eta^4}\right)\norm{\BG^{sym}}^2.
\end{equation}
Finally keeping in mind that $m\geq 1,$ we choose $\eta=\delta=h^{1/7}$ to optimize (\ref{4.39}). This gives 
\begin{equation}
\label{4.40}
\norm{\BG}^2\leq \frac{C}{h^{12/7}}\norm{\BG^{sym}}^2,
\end{equation}
and consequently also the second inequality in (\ref{4.30}) through (\ref{4.12}). In order to prove the first inequality in (\ref{4.30}), we note that (\ref{4.38}) implies in particular the bound 
$$\|\phi_{t,z}\|^2\leq \frac{C}{h}\|\BG^{sym}\|\|\phi_t\|,$$
which is equivalent to
\begin{equation}
\label{4.41}
m^2\|\phi_t\|^2 \leq \frac{C}{m^2h^2}\norm{\BG^{sym}}^2.
\end{equation}
Next we choose $\eta=\delta$ in (\ref{4.39}) to get the simplified variant 
\begin{equation}
\label{4.42}
m^2\norm{\phi_t}^2\leq C \left(\frac{m^2\delta^4}{h^2}+\frac{m^2}{\delta^4}+\frac{1}{\delta^6}+
 \frac{1}{m^2\delta^{10}}\right)\norm{\BG^{sym}}^2.
\end{equation}
We need to obtain an optimal estimate for $m^2\norm{\phi_t}^2$ from (\ref{4.41}) and (\ref{4.42}) regardless of the value of $m,$ by choosing the parameter $\delta>0$ appropriately. We choose $\delta$ so that the values of the first and last summands on the right-hand side of (\ref{4.39}) coincide: 
$\frac{m^2\delta^4}{h^2}=\frac{1}{m^2\delta^{10}}$. This gives $\delta=\frac{h^{1/7}}{m^{2/7}}$ and (\ref{4.39}) reduces to 
\begin{equation}
\label{4.43}
m^2\norm{\phi_t}^2\leq C \left(\frac{m^{6/7}}{h^{10/7}}+\frac{m^{22/7}}{h^{4/7}}+\frac{m^{12/7}}{h^{6/7}}\right)\norm{\BG^{sym}}^2.
\end{equation} 
It remains to note that if $m\geq\frac{1}{ h^{1/5}},$ then (\ref{4.38}) would give 
\begin{equation}
\label{4.44}
m^2\|\phi_t\|^2 \leq \frac{C}{h^{8/5}}\norm{\BG^{sym}}^2.
\end{equation}
If otherwise $m\geq\frac{1}{ h^{1/5}},$ then we would get the same estimate (\ref{4.41}) this time from (\ref{4.40}) instead. Consequently (\ref{4.41}) is fulfilled independently of $m\in\mathbb N.$ Finally putting together (\ref{4.30}) and (\ref{4.41}) we arrive at the first estimate in (\ref{4.26}) in the case $m\geq 1.$ 
In the case $m=0$ there is no $z-$variable dependence, thus obviously have $\mathrm{col}_3(\nabla\BGf)=0$ and $\phi_\Gth=\phi_z=0,$ hence both lower bounds in 
(\ref{3.7}) become trivial. This completes the proof of the Ansatz-free lower bound parts of Theorem~\ref{th:3.3}.

\end{proof}

\subsection{The Ans\"atze}

\noindent \textbf{Part (i).} An Ansatz realizing the asymptotics in (\ref{3.5}) can be constructed in numerous ways. For instance, one Kirchhoff-like Ansatz which can be as such was constructed in [\ref{bib:Harutyunyan.2}], see also [\ref{bib:Harutyunyan.3}]. We present it here for the convenience of the reader. One chooses 
\begin{equation}
\label{4.45}
\begin{cases}
\phi_t=W(\frac{\Gth}{\sqrt{h}},\frac{z-L/2}{\sqrt{h}}),\\[10pt]
\phi_\Gth=-\frac{t}{\sqrt{h}} \cdot W_{,\Gth}(\frac{\Gth}{\sqrt{h}},\frac{z-L/2}{\sqrt{h}}),\\[10pt]
\phi_z=-\frac{t}{\sqrt{h}} \cdot W_{,z}(\frac{\Gth}{\sqrt{h}},\frac{z-L/2}{\sqrt{h}}),
\end{cases}
\end{equation}
where $W$ is a smooth function compactly supported in $(0,p)\times (0,L).$ Also, $W$ is chosen so that $W$ and all its first and second order derivatives be of order one. It is then easy to see that one gets for this choice $\|e(\BGf)\|\sim h$ and $\|\mathrm{col}_3(\nabla\BGf)\|=\sqrt{h}$ as $h\to 0.$ This gives the asymptotics 
$$\frac{\|e(\BGf)\|^2}{\|\mathrm{col}_3(\nabla\BGf)\|^2}\sim h,$$
as $h\to 0,$ i.e., (\ref{3.5}).\\
\noindent \textbf{Part (ii): Second estimate in (\ref{3.7}).} The idea and the novelty in this case is to localize the Ans\"atze at the zero curvature points, and make use of the fact that the curvature vanishes. Namely, assume again the domain of the variable $\Gth$ is $[-1,1]$ and the point $\Gth=0$ is a zero curvature point, i.e., 
$\Gth(0)=0$ and $c\Gth^2\leq k(\Gth)\leq \frac{1}{c}\Gth^2$ for all $\Gth\in [-1,1].$ Let $\delta=h^\alpha$ be a small parameter ($\alpha>0$) yet to be chosen. We adjust (\ref{4.45}) as 
\begin{equation}
\label{4.46}
\begin{cases}
\phi_t=W(\frac{\Gth}{\delta},\frac{z-L/2}{\delta}),\\[10pt]
\phi_\Gth=-\frac{t}{\delta} \cdot W_{,\Gth}(\frac{\Gth}{\delta},\frac{z-L/2}{\delta}),\\[10pt]
\phi_z=-\frac{t}{\delta}\cdot W_{,z}(\frac{\Gth}{\delta},\frac{z-L/2}{\delta}),
\end{cases}
\end{equation}
where $W(\theta,z)$ is again a smooth function, compactly supported on $D=(-1,1)^2$ such that $W$ and all its first and second order derivatives be of order one. 
Computing the simplified gradient $\BG$ in (\ref{4.11}) we get:
\begin{align*}
\BG=\begin{bmatrix} 
0     &     \frac{W_{,\theta}-k(\theta)tW_{,\theta}}{\delta }    &     \frac{W_{,z}}{\delta} \\[10pt]
-\frac{W_{,\theta}}{\delta}    &    \frac{tW_{,\theta\theta}}{\delta^2}+k(\theta)W      &      \frac{tW_{,\theta z}}{\delta^2}\\[10pt]
-\frac{W_{,z}}{\delta}    &     \frac{tW_{,z\theta}}{\delta^2}   &     -\frac{tW_{,zz}}{\delta^2}
\end{bmatrix}	
\end{align*}

and

$$
\BG^{sym}=\begin{bmatrix} 
0    &    -\frac{k(\theta)tW_{,\theta}}{2\delta}    &     0 \\[10pt]
-\frac{k(\theta)tW_{,\theta}}{2\delta}    &    \frac{tW_{,\theta\theta}}{\delta^2}+k(\theta)W  &   \frac{2tW_{,\theta z}}{\delta^2} \\[10pt]
0    &    \frac{2tW_{,\theta z}}{\delta^2}    &    -\frac{tW_{,zz}}{\delta^2}
\end{bmatrix}.
$$
Now choosing $\delta=h^{1/4}$ and recalling that $k(\Gth)\sim\Gth^2,$ it is easy to see that $\|\mathrm{col}_3(\nabla\BGf)\|^2=h,$ 
and $\|e(\BGf)\|^2\sim h^{5/2},$ as $h\to 0.$ This realizes the asymptotics 
$$\frac{\|e(\BGf)\|^2}{\|\mathrm{col}_3(\nabla\BGf)\|^2}\sim h^{3/2}$$ 
as $h\to 0,$ i.e., the right-hand side of the second inequality in (\ref{3.7}). 

\begin{remark}
\label{rem:4.2}
It is easy to see that if the curvature $k(\theta)$ has a zero of order $\beta\geq 2,$ then by choosing $\delta=h^{\frac{1}{\beta+2}},$ the Ansatz in (\ref{4.43}) would actually give us $$\frac{\|e(\BGf)\|^2}{\|\mathrm{col}_3(\nabla\BGf)\|^2}\sim h^{\frac{2\beta+2}{\beta+2}},$$
as $h\to 0.$
\end{remark}

\noindent \textbf{Part (iii): First estimate in (\ref{3.7}).} We construct the Ansatz utilizing the idea of linearization in $t$ suggested in [\ref{bib:Gra.Har.1}]. Namely we seek the Ansatz in the following form 
\begin{equation}
\label{4.47}
\begin{cases}
\phi_t=u,\\
\phi_\theta=tv_1+v_2,\\
\phi_z=tw_1+w_2,
\end{cases}
\end{equation}
where the functions $u,v_1,v_2,w_1$ and $w_2$ depend only on $\Gth$ and $z.$ The implified gradient will then be given by 
\begin{equation*}
\BG=
\begin{bmatrix} 
0       &       u_{,\Gth}-k(\theta)\left(tv_1+v_2\right)     &      u_{,z} \\[10pt]
v_1    &        tv_{1,\theta}+v_{2,\Gth}+k(\Gth)u           &        tv_{1,z}+v_{2,z}\\[10pt]
w_1    &      tw_{1,\theta}+w_{2,\theta}                        &          tw_{1,z}+w_{2,z}
\end{bmatrix}.
\end{equation*}
In order to make the symmetric part of the gradient small, we choose the functions $u,v_1,v_2,w_1,$ and $w_2$ to satisfy the relationships
\begin{equation}
\label{4.48}
v_1=-u_{,\theta},\qquad w_1=-u_{,z},\qquad v_{2,\theta}=-k(\theta)u.
\end{equation}
This will reduce the simplified gradient to 
\begin{equation}
\label{4.49}
\BG=
\begin{bmatrix} 
0       &       u_{,\Gth}-k(\theta)\left(tv_1+v_2\right)     &      u_{,z} \\[10pt]
v_1    &        tv_{1,\theta}         &        tv_{1,z}+v_{2,z}\\[10pt]
w_1    &      tw_{1,\theta}+w_{2,\theta}                        &          tw_{1,z}+w_{2,z}
\end{bmatrix},
\end{equation}
and the symmetric part will be
\begin{equation}
\label{4.50}
\BG^{sym}=
\begin{bmatrix} 
0       &      -\frac{1}{2}k(\theta)\left(tv_1+v_2\right)    &    0 \\[10pt]
-\frac{1}{2}k(\theta)\left(tv_1+v_2\right)    &     tv_{1,\theta}    &      \frac{1}{2}( t(v_{1,z}+w_{1,\theta})+v_{2,z}+w_{2,\theta})\\[10pt]
0      &    \frac{1}{2}( t(v_{1,z}+w_{1,\theta})+v_{2,z}+w_{2,\theta})    &      tw_{1,z}+w_{2,z}
\end{bmatrix} 
\end{equation}
Thus to make the $\Gth z$ component small, we need a new relationship:
\begin{equation}
\label{4.51}
w_{2,\theta}=-v_{2,z},
\end{equation}
which simplifies $\BG^{sym}$ further to
\begin{equation}
\label{4.52}
e(\BG)=
\begin{bmatrix} 
0       &      -\frac{1}{2}k(\theta)\left(tv_1+v_2\right)    &    0 \\[10pt]
-\frac{1}{2}k(\theta)\left(tv_1+v_2\right)    &     tv_{1,\theta}    &      \frac{t}{2}(v_{1,z}+w_{1,\theta})\\[10pt]
0      &    \frac{t}{2}(v_{1,z}+w_{1,\theta})    &      tw_{1,z}+w_{2,z}
\end{bmatrix}.
\end{equation}
Let now $W(\theta,z)$ be a smooth compactly supported function on $D=(-1,1)\times(0,L)$ such that $W$ and all its first, second, and third order derivatives be of order one. 
We choose 
\begin{equation}
\label{4.53}
\begin{cases}
w_2=-k^2(\theta)W_{,z}(\frac{\theta}{\delta},z)\\[5pt]
v_2=2k(\theta)k'(\theta)W(\frac{\theta}{\delta},z)+\frac{k(\theta)^2}{\delta}W_{,\theta}(\frac{\theta}{\delta},z)\\[5pt]
u=-\frac{2k'^2(\theta)}{k(\theta)}W(\frac{\theta}{\delta},z)-2k''(\theta)W(\frac{\theta}{\delta},z)-\frac{4k'(\theta)}{\delta}W_{,\theta}(\frac{\theta}{\delta},z)-\frac{k(\theta)}{\delta^2}W_{,\theta\theta}(\frac{\theta}{\delta},z)\\[5pt]
v_1=-u_{,\theta}\\[5pt]
w_1=-u_{,z},
\end{cases}
\end{equation}
where $\delta=h^\beta$ ($\beta>0$) is a small parameter yet to be chosen. From the fact that $\BGa$ is of class $C^5$ and $k(0)=k'(0)=0,$ $k''(0)>0,$ we have for small enough $h$ the obvious bounds $|k'''(\theta)|+|k''(\theta)|\leq C,$ $|k'(\theta)|\leq C|\Gth|,$ $c\Gth^2\leq |k(\theta)|\leq \frac{1}{c}\Gth^2,$ for $\Gth\in (-\delta,\delta).$
Consequently we can easily verify that
$$
\frac{\norm{\BG^{sym}}^2}{\norm{\BG}^2}\sim \max\{h^{6\beta}(h^{2-2\beta}+h^{6\beta}),h^{2-2\beta},h^2+h^{10\beta}\}
$$
as $h\to 0.$ In order to minimize the left hand side we choose $\beta=1/6$ that gives the desired result
$$\frac{\norm{e(\BGf)}^2}{\norm{\nabla\BGf}^2}\sim h^{\frac{5}{3}}.$$




\subsection{The buckling load}
\label{sec:4.4}

\begin{proof}[Proof of Theorem~\ref{th:3.1}] 
In this section we will study the stability of the homogeneous trivial branch given by (in (\ref{4.1}))
\begin{equation}
\label{4.54}
y_t=(1+a(\lambda))(t+\bm\alpha(\theta)\cdot \bm e_t(\theta)), \qquad y_\theta=(1+a(\lambda))\bm\alpha(\theta)\cdot \bm e_\theta(\theta), \qquad y_z=(1-\lambda)z, 
\end{equation}
within the theory of buckling of slender structures presented in Section~\ref{sec:2}. Let us start by verifying that the conditions for the applicability of the theory are fulfilled. These consist of the conditions in Definition~\ref{def:2.1} and Theorem~\ref{th:2.5}. The conditions in Definition~\ref{def:2.1} have already been verified by 
Lemma~\ref{4.1}, while the conditions in Theorem~\ref{th:2.5} immediately follow from Theorem~\ref{th:3.3} and Remark~\ref{rem:3.4}. This implies that in fact we can 
calculate the buckling load as in (\ref{2.11}), which we have proven reduces to the minimization problem in (\ref{4.10}), i.e., 
\begin{equation}
\label{4.55}
\Gl(h)=\inf_{\BGf\in V} \frac{\int_{\Omega_h} \left\langle \BL_{0} e(\BGf), e(\BGf)\right\rangle d\Bx}{E\cdot \|\mathrm{col}_3(\nabla\BGf)\|^{2}}.
\end{equation}
By properties (P4) and (P5) the tensor $\BL_0$ is positive definite on symmetric matrices, i.e., there exist a positive constant $C>0$ such that 
$$\frac{1}{C}\|e(\BGf)\|^2\leq \int_{\Omega_h} \left\langle \BL_{0} e(\BGf), e(\BGf)\right\rangle d\Bx \leq C\|e(\BGf)\|^2,$$
for all $\BGf\in H^1(\Omega_h).$ Hence (\ref{4.52}) yields the asymptotics 
\begin{equation}
\label{4.56}
\Gl(h)\sim \inf_{\BGf\in V} \frac{\|e(\BGf)\|^2}{\|\mathrm{col}_3(\nabla\BGf)\|^{2}}, 
\end{equation}
and Theorem~\ref{3.1} immediately follows from Theorem~\ref{th:3.3}. 

\end{proof}

\section*{Acknowledgement}
We would like to thank Yury Grabovsky for helpful comments. Anonymous referees are also thanked for useful suggestions on how to improve the presentation of the manuscript. This material is supported by the National Science Foundation under Grants No. DMS-1814361.

\appendix

\section{Explicit trivial branch for Neo-Hookean materials} 
\setcounter{equation}{0}

In this section we provide an explicit form for the function $a(\lambda)$ in (\ref{4.1}) and (\ref{4.8}) for the special case of Neo-Hookean solids. 
In that case the Piola-Kirchhoff stress tensor is given by  
$$\BP(\BF)=\tilde \mu(\BF-\BF^{-T})+ 2\tilde{\lambda}(J-1)J\BF^{-T},\quad J=\det(\BF),$$
where $\tilde \mu$ and $\tilde\Gl$ are the Lam'e parameters\footnote{Note that we are using the symbol "tilde" over the letters to avoid confusion between the Lam\'e parameter $\tilde\Gl$ and the loading parameter $\Gl.$}. Letting $b(\Gl)=(1+a(\lambda))^2,$ the system (\ref{4.3}) will reduce to 
$$b(\Gl)^2(1-\lambda)^2-b(\Gl)(1-\lambda-\frac{\tilde\mu}{\tilde{\lambda}})-\frac{\tilde\mu}{\tilde{\lambda}}=0,$$
which gives the solution
$$b(\Gl)=\frac{1-\lambda-\frac{\tilde\mu}{\tilde{\lambda}}+\sqrt{(1-\lambda-\frac{\tilde\mu}{\tilde{\lambda}})^2+4(1-\lambda)^2\frac{\tilde\mu}{\tilde{\lambda}}}
}{2(1-\lambda)^2},$$
and for $a$ we get
$$a(\lambda)=\frac{\sqrt{1-\lambda-\frac{\tilde\mu}{\tilde{\lambda}}+\sqrt{(1-\lambda-\frac{\tilde\mu}{\tilde{\lambda}})^2
+4(1-\lambda)^2\frac{\tilde\mu}{\tilde{\lambda}}}}}{\sqrt{2}(1-\lambda)}-1.
$$


\end{document}